\def\cK{\mathcal{K}}
\def\sphere{S^{n-1}}
\def\Rn{{\mathbb R^n}}
 \def\R{\mathbb{R}}
\newtheorem{problem}{Problem}[section]
\newtheorem{theorem}{Theorem}[section]
\newtheorem{lemma}{Lemma}[section]
\newtheorem{remark}{Remark}[section]
\newtheorem{proposition}{Proposition}[section]
\newtheorem{corollary}{Corollary}[section]
\newtheorem{definition}{Definition}[section]
\def\bt{\begin{theorem}}
\def\et{\end{theorem}}
\def\bl{\begin{lemma}}
\def\el{\end{lemma}}
\def\br{\begin{remark}}
\def\er{\end{remark}}
\def\bc{\begin{corollary}}
\def\ec{\end{corollary}}
\def\bd{\begin{definition}}
\def\ed{\end{definition}}
\def\bp{\begin{proposition}}
\def\ep{\end{proposition}}
\def\copsum{\alpha_1 \circ A_{1} \oplus_{p}\alpha_2 \circ A_{2}}
\begin{document}
\title{On the $L_p$ Brunn-Minkowski theory and the $L_p$ Minkowski problem for $C$-coconvex
sets \footnote{Keywords: $C$-coconvex set, $L_p$ Brunn-Minkowski
inequality,  $L_p$ Minkowski inequality, $L_p$ Minkowski type
problem, $L_p$  mixed volume.}}

\author{Jin Yang, Deping Ye and Baocheng Zhu}
\date{}
\maketitle

\begin{abstract}
 Let  $C$ be a pointed closed convex cone in $\Rn$
with vertex at the origin $o$ and having nonempty interior.   The
set $A\subset C$ is $C$-coconvex if the volume of $A$ is finite and
$A^{\bullet}=C\setminus A$ is a closed convex set. For $0<p<1$,  the
$p$-co-sum of $C$-coconvex sets is introduced, and the corresponding
$L_p$ Brunn-Minkowski inequality for $C$-coconvex sets is
established.  We also define the $L_p$ surface area measures, for
$0\neq p\in \R$,  of certain $C$-coconvex sets, which are critical
in deriving a variational formula of the volume of the Wulff shape
associated with a family of  functions obtained from the $p$-co-sum.
This motivates the $L_p$ Minkowski problem aiming to characterize
the $L_p$ surface area measures of $C$-coconvex sets. The existence
of solutions to the  $L_p$ Minkowski problem for all $0\neq p\in\R$
is established. The $L_p$ Minkowski inequality for $0<p<1$ is proved
and is used to obtain the uniqueness of the solutions to the  $L_p$
Minkowski problem for $0<p<1$.

For $p=0$, we introduce $(1-\tau)\diamond A_1\oplus_0\tau\diamond A_2$, the log-co-sum of two $C$-coconvex
sets $A_{1}$ and $A_{2}$ with respect to $\tau \in(0, 1)$, and prove the  log-Brunn-Minkowski inequality of $C$-coconvex sets. The
log-Minkowski inequality is also obtained and is applied to prove
the uniqueness of the solutions to the log-Minkowski problem that characterizes the cone-volume measures of $C$-coconvex sets.  Our
result solves an open problem raised by Schneider in [Schneider,
Adv. Math., 332 (2018), pp. 199-219].

\vskip 2mm 2010 Mathematics Subject Classification: 53A15, 52B45, 52A39.
 \end{abstract}

 \section{Introduction and overview of the main results}

Motivated by the elegant works by Khovanski\u{\i} and  Timorin
\cite{Kh2014} and Milman and Rotem \cite{MR2014}, Schneider in his
pioneer work \cite{Schneider2018} took the first  step to build up
the Brunn-Minkowski theory for $C$-coconvex sets.  Let $C$  be a
pointed closed convex cone in $\Rn$ with vertex at the origin $o$
and having nonempty interior.  The set $A\subset C$ is called a
$C$-coconvex set if the volume of $A$ is finite and
$A^{\bullet}=C\setminus A$ is  closed and convex (the set
$A^{\bullet}$ will be called a $C$-close set). The set $A^{\bullet}$
is said to be $C$-full when the $C$-coconvex set $A$ is bounded and
nonempty. For two $C$-coconvex sets $A_1$ and $A_2$ with
$A_1^{\bullet}=C\setminus A_1$ and $A_2^{\bullet}=C\setminus A_2$,
the ``co-sum" of $A_1$ and $A_2$, denoted by $A_1\oplus A_2$, is
defined as $A_1\oplus A_2=C\setminus (A_1^{\bullet}+A_2^{\bullet}),$
where ``+" denotes the usual Minkowski addition.  Note that
$A_1^{\bullet}+A_2^{\bullet}\subset C$ is  closed and convex.
Moreover, $V_n(A_1\oplus A_2)$,  the volume of $A_1\oplus A_2$,
indeed satisfies the following inequality
\begin{equation}\label{B-M-no-const} V_{n}\left(A_{1}\oplus
A_{2}\right)^{\frac{1}{n}}\leq
 V_{n}(A_{1})^{\frac{1}{n}}+
V_{n}(A_{2})^{\frac{1}{n}}.
\end{equation}
This shows that $V_n(A_1\oplus A_2)$ is finite and hence $A_1\oplus
A_2$ defines a $C$-coconvex set. Inequality \eqref{B-M-no-const} is
indeed equivalent to  \begin{equation}\label{brunn-minkowski ineq}
V_{n}\left((1-\lambda)A_{1}\oplus\lambda
A_{2}\right)^{\frac{1}{n}}\leq
(1-\lambda)V_{n}(A_{1})^{\frac{1}{n}}+\lambda
V_{n}(A_{2})^{\frac{1}{n}}
\end{equation}
for $\lambda\in (0, 1)$, where $V_n(\lambda A)=\lambda ^n V_n(A)$
for $\lambda A=\{\lambda x: x\in A\}$.   It has been also proved in
\cite{Schneider2018} that equality holds for \eqref{brunn-minkowski
ineq} if and only if $A_1=\alpha A_2$ for some $\alpha>0$.
Inequality \eqref{brunn-minkowski ineq} (or equivalently
\eqref{B-M-no-const}) is called the  Brunn-Minkowski inequality for
$C$-coconvex sets and plays essential roles in the development of
the  Brunn-Minkowski theory for $C$-coconvex sets in
\cite{Schneider2018}. In particular, Schneider used inequality \eqref{brunn-minkowski
ineq} to obtain the Minkowski inequality for $C$-coconvex sets:
\begin{equation} \label{Minkowski ineq}
\overline{V}_1(A_1, A_2)^n\le V_n(A_1)^{n-1}V_n(A_2),
\end{equation} with
equality if and only if $A_1=\alpha A_2$ for some $\alpha>0$ (see
Section \ref{prepar} for notations).

The Brunn-Minkowski inequality for $C$-coconvex sets reveals the
dissimilarity to the celebrated Brunn-Minkowski inequality for
convex bodies (convex compact sets in $\Rn$ with nonempty
interiors). The Brunn-Minkowski inequality for convex bodies reads:
for any $\lambda\in (0, 1)$ and for any two convex bodies $K_1$ and
$K_2$, one has  \begin{equation} V_{n}\left((1-\lambda)K_{1}+\lambda
K_{2}\right)^{\frac{1}{n}}\geq
(1-\lambda)V_{n}(K_{1})^{\frac{1}{n}}+\lambda
V_{n}(K_{2})^{\frac{1}{n}}. \label{B-M-convex bodies}
\end{equation}
Inequality  \eqref{B-M-convex bodies}  shows that $V_n(\cdot)$ is
$\frac{1}{n}$-concave in terms of the Minkowski sum of convex
bodies, while inequality \eqref{brunn-minkowski ineq} implies that  $V_n(\cdot)$ is $\frac{1}{n}$-convex in terms of the
co-sum of $C$-coconvex sets. In particular, inequalities
\eqref{brunn-minkowski ineq} and \eqref{B-M-convex bodies} have
opposite directions of inequalities; this has been carried over to
their equivalent Minkowski type inequalities.

On the other hand, the Brunn-Minkowski theories for $C$-coconvex
sets and for convex bodies also exhibit  similarity. For instance,
the surface area measures for $C$-coconvex sets can be defined in a
way rather similar to the surface area measures of convex bodies.
Let $C^{\circ}$ be the polar cone of $C$, i.e.,  $C^\circ=\{x\in
\mathbb{R}^{n}:x\cdot y\leq 0\ \  \mathrm{for\ all}\ \ y\in C\},$
where $x\cdot y$ means the usual inner product of  $x, y\in
\mathbb{R}^n$. Denote by $\partial E$ and $\mathrm{int} E$  the
boundary and the interior of $E\subset \Rn$, respectively.  Let
$S^{n-1}$ be the unit sphere in $\Rn$, $\Omega_C= S^{n-1}\cap
\mathrm{int} C^\circ$ and  $A^{\bullet}$ be a $C$-close set. Thus
the Gauss map $\nu_{A^{\bullet}}: \partial A^{\bullet} \cap
\mathrm{int} C\rightarrow S^{n-1}$ can be defined by the usual way:
$\nu_{A^{\bullet}}(x)$ is an outer unit  normal vector of $\partial
A^{\bullet}$ at  $x\in \partial A^{\bullet}\cap \mathrm{int} C$. The
surface area  measures  of $A^\bullet$ (see \cite[p.
201]{Schneider2018}) and $A$ on $\Omega_C$, denoted by
$S_{n-1}(A^{\bullet},\cdot)$ and $\overline{S}_{n-1}(A, \cdot)$
respectively, can then be defined by: for any Borel set $\eta\subset
\Omega_C$,
\begin{equation}\label{surface-area-close}\overline{S}_{n-1}(A,
\eta)=
S_{n-1}(A^{\bullet},\eta)=\mathcal{H}^{n-1}(\nu^{-1}_{A^{\bullet}}(\eta)),
\end{equation}
where $\nu^{-1}_{A^\bullet}$ denotes the reverse of
$\nu_{A^{\bullet}}$ and $\mathcal{H}^{n-1}$ is the
$(n-1)$-dimensional Hausdorff measure. Note that the surface area measure of $A^{\bullet}$ is uniquely determined, because the set of singular boundary points of $A^{\bullet}$ has $(n-1)$-dimensional Hausdorff measure zero. With the help of the surface
area measures in \eqref{surface-area-close}, $V_n(A_1)$ and
$\overline{V}_1(A_1, A_2)$ for $C$-coconvex sets $A_1$ and $A_2$
have the integral expressions (see \eqref{integral formula for
volume} and \eqref{mixed volume}, respectively) rather similar to
those for the volume and the mixed volume for convex bodies (see
e.g., \cite{Schneider2014}). Moreover, analogous to the classical
Minkowski problem \cite{min1897, Minkowski1903}, a Minkowski problem
aiming to characterize the surface area measures of $C$-coconvex
sets has been proposed by Schneider in \cite{Schneider2018}. A solution to this problem was obtained in \cite[Theorem 3]{Schneider2018} when the given finite Borel measure  has compact support in $\Omega_C$. This result was improved in \cite[Theorem 1]{Schneider2021} where the given Borel measure is only assumed to be finite (i.e., removing the restriction on its support). The uniqueness of the solution was also established in \cite[Theorem 2]{Schneider2018}. We would like to mention that the Minkowski problem to characterize the surface area measure for certain unbounded convex surfaces (or convex sets) has been done earlier by Chou and Wang \cite{CW-Unbounded}, Pogorelov \cite{Pogorelov}, and Urbas \cite{Urbas}.

The classical Minkowski problem has a long history which can be
traced back to \cite{Aleks1938, FJ1938, min1897, Minkowski1903}. In
his groundbreaking paper \cite{Lut1993}, Lutwak introduced the
elegant $L_p$ Minkowski problem extending the classical Minkowski
problem in a natural but nontrivial way. Since then, the $L_p$
Minkowski problem has attracted great attention, see e.g.,
\cite{Chen2006,CW2006,HLX2015,HLYZ2005,JLW2015,JLZ2016,LW2013,LYZ2004,
Umanskiy2003,GZhu2015I,GZhu2015II,GZhu2017} among others. In the
case $p=0$, the $L_p$ Minkowski problem for convex bodies reduces to
the log-Minkowski problem, and amazing progress has been made to this
challenging problem (see e.g., \cite{BHZ,BH2016,BLYZ2013,CLZ2019,HL2014,
stancu02,stancu08,zhug2014}).   No doubt that the $L_p$ Minkowski
problem has nice applications (see e.g., \cite{CLYZ2009,
HS2009b,LYZ2002,zhang1999}) and plays a central role in the
development of the $L_p$ Brunn-Minkowski theory of convex bodies, a
theory founded by Lutwak \cite{Lut1993, Lu96} based on the combination
of the volume and the $L_p$ addition of convex bodies. Contributions
to this $L_p$ theory include, e.g.,  \cite{CG2002a, Haberl2008, HS2009a, LYZ2000, LZ1997, MW2000, PW2012, SW2004, WY2008,
Ye2015a,YZZ2015, Zhao2016,ZZX2015}.

A major goal in this paper is to develop an $L_p$ Brunn-Minkowski
theory for $C$-coconvex sets, and provide the counterpart of the $L_p$ Brunn-Minkowski theory for convex bodies.
Our first contribution is to define an $L_p$ addition of
$C$-coconvex sets in Section \ref{p-sum}, which is called the
$p$-co-sum of $C$-coconvex sets. That is, for $0<p<1$ and two
$C$-coconvex sets $A_1, A_2$, the $p$-co-sum of $A_1$ and $A_2$,
denoted by $A_{1} \oplus_{p} A_{2}$, is  defined by $ A_{1}
\oplus_{p} A_{2}=C\setminus (A_{1} \oplus_{p} A_{2})^{\bullet}$ with
\begin{equation*}  (A_{1} \oplus_{p} A_{2})^\bullet =C\cap
\bigcap_{u\in \Omega_{C}} \Big\{x\in \mathbb{R}^n: x\cdot u\le
-\overline{h}\left(A_{1} \oplus_{p} A_{2}, u\right)
\Big\},\end{equation*}  where  $\overline{h}(A, \cdot):
\Omega_C\rightarrow \R$ is the support function of a
$C$-coconvex set $A$ on $\Omega_C$  (see (\ref{definition-suppport}) for the definition of $\overline{h}$), and for $0<p<1$, \begin{equation*}
\overline{h}\left(A_{1} \oplus_{p} A_{2},
u\right)=\big(\overline{h}\left(A_{1},
u\right)^{p}+\overline{h}\left(A_{2},
u\right)^{p}\big)^{\frac{1}{p}}, \ \ \text{for}\ \ u\in \Omega_C.
\end{equation*}
The set $A_{1} \oplus_{p} A_{2}$ turns out to be a $C$-coconvex set
and this result will be presented in Theorem
\ref{co-sum-set-1-def-theo}.  Moreover, we also establish  the $L_p$
Brunn-Minkowski inequality for $C$-coconvex sets in Theorem
\ref{p-B-M-I}: {\em Let $A_{1}, A_{2}$ be $C$-coconvex
sets. If $0<p<1$, then
\begin{equation*}
V_{n}(A_{1}\oplus_{p}A_{2})^{\frac{p}{n}}\leq
V_{n}(A_{1})^{\frac{p}{n}}+V_{n}(A_{2})^{\frac{p}{n}},
\end{equation*}
with equality if and only if $A_{1}=\alpha A_{2}$ for some
$\alpha>0$.} The above $L_p$ Brunn-Minkowski inequality for
$C$-coconvex sets reduces to inequality \eqref{B-M-no-const} if
$p=1$. It also shows that $V_n(\cdot)$ is $\frac{p}{n}$-convex in
terms of the $p$-co-sum of $C$-coconvex sets, which clearly
demonstrates its difference from the $L_p$ Brunn-Minkowski
inequality for convex bodies  (namely $V_n(\cdot)$ is
$\frac{p}{n}$-concave in terms of the $L_p$ addition of convex
bodies) \cite{Lut1993}. The above $L_p$ Brunn-Minkowski inequality
for $C$-coconvex sets provides  precisely the ``complementary"
analogue of the $L_p$ Brunn-Minkowski  inequality  for convex bodies
for $0<p<1$ conjectured by B\"{o}r\"{o}czky, Lutwak, Yang, and Zhang
in \cite{BLYZ2012}.  Note that this conjectured inequality is still
quite open in general and important progress has been made recently
by  Chen,  Huang, Li and  Liu \cite{CHLZ-2019}, and  Kolesnikov and
Milman \cite{K-M-2020}.

Our second contribution is to develop the $L_p$ surface area measures
of $C$-coconvex sets and study the related $L_p$ Minkowski problem for $0\neq p\in\mathbb{R}$.  Let
$\omega\subset \Omega_C$ be a compact set.  A  closed convex set
$ A^\bullet\subseteq C$ is  $C$-determined by $\omega$ if
$$
 A^\bullet=C\cap\bigcap_{u\in\omega}H^{-}(u,
h(A^\bullet,u)),
$$
where $h(A^\bullet, \cdot)$ denotes the support function of $A^\bullet$ on $\Omega_C$ (see its definition in (\ref{supp-c-c})).
The collection of all closed convex sets that are $C$-determined by
$\omega$ will be denoted by $\mathcal{K}(C,\omega).$  The $L_p$
surface area measure of a $C$-coconvex set $A$, with
$A^{\bullet}=C\setminus A\in \mathcal{K}(C,\omega)$, is denoted by
$\overline{S}_{n-1, p}\left(A,\cdot\right)$ and defined by
$$
\overline{S}_{n-1, p}\left(A,\cdot\right)=\overline{h}
\left(A,\cdot\right)^{1-p}\overline {S}_{n-1}\left(A,\cdot\right)\ \
\mathrm{on }\ \ \omega.
$$
The $L_p$ surface area measure of $A$ naturally appears in the
variational formula derived from the volume of the Wulff shape
associated with $(C, \omega, f_{\tau})$, where $f_{\tau}$ is given
by \eqref{def-f-tau}. This variational formula is presented and
proved in Theorem \ref{varitional-theorem}.  A fundamental question
related to the $L_p$ surface area measure is the following $L_p$
Minkowski problem (i.e., Problem \ref{lp-min-problem}): \vskip 2mm
\noindent {\bf The $L_p$ Minkowski problem.} {\em  Let  $0 \neq p\in
\R$ and $\omega\subset\Omega_C$ be a compact set. Under what
necessary and/or  sufficient conditions on a finite Borel measure
$\mu$ on $\omega$ does there exist a  set $A^\bullet\in
\mathcal{K}(C, \omega)$  with $A=C\setminus A^\bullet$ such that
$\mu= \overline {S}_{n-1, p}(A,\cdot)$?}

\vskip 2mm \noindent A solution to this $L_p$ Minkowski problem is
established in Theorem \ref{solution-lp-minkowski-1}, which reads:
{\em Let $\omega$  be a compact set of $\Omega_C$. Suppose that
$\mu$ is a nonzero finite Borel measure on $\Omega_C$ whose support
is concentrated on $\omega$.   For $0\neq p\in \R$, there exists a
$C$-full set $A_0^\bullet$ ($A_0=C\setminus A_0^\bullet$) such that
\begin{equation*}  \mu = c \cdot \overline{S}_{n-1,p}(A_0, \cdot) \ \ \ \mathrm{with} \ \  \ c=\frac{1}{nV_n(A_0)}
\left(\int_{\omega}\overline{h}(A_0, u)^{p}\,d\mu(u)\right).
\end{equation*}  }

Section \ref{M-inequality-5} aims to develop the $L_p$ Minkowski
inequality for the $L_p$ mixed volume of $C$-coconvex sets (see
Theorem \ref{9}).  In particular, we prove that for $0<p<1$ and  for
two $C$-coconvex sets $A_1, A_2$  such that $A_1^{\bullet}\in
\mathcal{K}(C, \omega)$ and $A_2^{\bullet}\in \mathcal{K}(C,
\omega)$, one has \begin{equation*}
\overline{V}_{p}(A_1,A_{2})=\dfrac{1}{n}\int_{\omega}\overline{h}(A_2,
u)^{p} \,d\overline{S}_{n-1, p}\left(A_1,u\right) \leq
V_{n}(A_1)^{\frac{n-p}{n}}V_{n}(A_{2})^{\frac{p}{n}},
\end{equation*}
with equality if and only if $A_1=\alpha A_{2}$ for some $\alpha>0$.
Again, this $L_p$ Minkowski inequality has its form similar to  the
$L_p$ Minkowski inequality for convex bodies \cite{Lut1993}; but
these two $L_p$ Minkowski inequalities have different directions of
inequalities and work for different ranges of $p$. One important
application of the $L_p$ Minkowski inequality for $C$-coconvex sets
is to obtain the uniqueness of the solutions to the $L_p$ Minkowski
problem, see Theorems \ref{14} and
\ref{solution-lp-minkowski-1-norm} for more details.  We would like
to emphasize that although the $L_p$ Minkowski problem for
$C$-coconvex sets resembles the $L_p$ Minkowski problem for convex
bodies in many ways (such as their formulations), these two problems
are completely different,  for instance,  the former one is solvable
for all $0\neq p\in \R$, but the existence of solutions to the
latter one is still unknown for many $p\in \R$ (in particular for
$p<-n$).

We also make contributions to develop the $L_0$ (or log)
Brunn-Minkowski theory for $C$-coconvex sets. In Section
\ref{section:p=0}, we introduce the log-co-sum of two $C$-coconvex
sets $A_{1}$ and $A_{2}$ with respect to $\tau \in(0, 1)$ (or, for simplicity, the log-co-sum of  $A_{1}$ and $A_{2}$), denoted by $(1-\tau)\diamond A_1\oplus_0\tau\diamond A_2$, whose support function takes
the following form:  \begin{equation*}
\overline{h}((1-\tau)\diamond  A_1\oplus_0\tau\diamond  A_2, u)=
\overline{h}\left(A_{1}, u\right)^{1-\tau}  \overline{h}\left(A_{2},
u\right)^{\tau}, \ \ \mathrm{for} \ u\in \Omega_C.
 \end{equation*}
See \eqref{definition-p-sum-6},
\eqref{relation-supp-close-6} and \eqref{def-concovex-log} for  more details. The
log-Brunn-Minkowski inequality of $C$-coconvex sets  is proved in
Theorem \ref{minus is convex-6}, which asserts that  $V_n(\cdot)$
is log-convex in terms of the log-co-sum; namely for $\tau \in(0,
1)$ and for two $C$-coconvex sets $A_{1}, A_{2}$,  \begin{equation*}
V_{n}((1-\tau)\diamond  A_{1}\oplus_{0}\tau\diamond  A_{2}) \leq
 V_n(A_{1})^{1-\tau} V_n(A_{2})^{\tau}.
\end{equation*}
Equality characterization for the log-Brunn-Minkowski inequality is
given in Theorem \ref{log is convex-6-1}, where we also establish
the  log-Minkowski inequality \eqref{log-ine-1-1-6}:  for two
nonempty $C$-coconvex sets $A_{1}$ and $A_{2}$,
\begin{equation*}
\overline{V}_0(A_1, A_2)=\dfrac{1}{n}\int_{\Omega_C}
\log\bigg(\frac{\overline{h}(A_2, u)}{\overline{h}\left(A_1,
u\right)}\bigg) \overline{h}\left(A_1,
u\right)\,d\overline{S}_{n-1}\left(A_1, u\right) \leq
\frac{V_n(A_1)}{n}\cdot
\log\bigg(\dfrac{V_{n}(A_{2})}{V_{n}(A_{1})}\bigg),
 \end{equation*}
with equality if and only if $A_1=\alpha A_2$ for some $\alpha>0$.
These inequalities provide  precisely ``complementary" analogues of
the log-Brunn-Minkowski and log-Minkowski inequalities for convex
bodies conjectured by B\"{o}r\"{o}czky, Lutwak, Yang, and Zhang in
\cite{BLYZ2012}. Note that the log-Brunn-Minkowski and log-Minkowski
inequalities for convex bodies are still quite open in general and
have received a lot of attention, see e.g., \cite{BK-2020, ML-2015,
Saro2015, Stancu16, YangZhang-2019}.  The significance of our
log-Minkowski inequality for $C$-coconvex sets (i.e., \eqref{log-ine-1-1-6})  can  also  be seen from the
fact that this inequality gives a positive answer to an open problem
raised by Schneider in \cite{Schneider2018}.  Indeed, Schneider in
\cite{Schneider2018} proposed the log-Minkowski problem for
$C$-coconvex sets aiming to  characterize  the cone-volume measures
of $C$-close sets, and also provided solutions to the log-Minkowski
problem in \cite[Theorems 4 and 5]{Schneider2018}. Schneider raised
an open problem regarding the uniqueness of the solutions to the
log-Minkowski problem for $C$-coconvex sets \cite[p.
203]{Schneider2018}. In Theorem \ref{14-log}, we use the the log-Minkowski inequality for
$C$-coconvex sets \eqref{log-ine-1-1-6} to confirm that solutions
to  the log-Minkowski problem for $C$-coconvex sets are indeed unique.

 \section{Background and preliminaries}\label{prepar}
The central objects of interest in this paper are  the closed convex
sets and related $C$-coconvex sets in the fixed pointed closed
convex cone $C$ defined in the Euclidean space $\Rn$. By $V_n(\cdot)$ we mean the volume. The
notations and definitions in this paper mainly follow those in
\cite{Schneider2014,Schneider2018} for consistence.

The dot product of two vectors $x, y\in \Rn$ is denoted by $x\cdot
y$.  A set $E\subset \Rn$ is said to be convex if  $\lambda
x+(1-\lambda)y\in E$  for any $\lambda\in [0, 1]$ and $x, y\in E$.
We say $C\subset \Rn$ a closed convex cone if $C$ is a closed and convex subset with nonempty interior such that  $\lambda x\in C$ for
any $x\in C$  and $\lambda \geq 0.$  Note that, if $C$ is a closed
convex cone, then $\lambda C = C$ for any $\lambda> 0$ and
especially $C + C = C$. A closed convex cone $C$ is called a pointed
cone if $-C\cap C=\{o\}$, where $o$ denotes the origin of $\Rn$ and
$-C=\{-x: x\in C\}$.

Denote by $\sphere=\big\{x\in \mathbb{R}^n: x\cdot x=1\big\}$  the unit sphere in $\Rn$. For $t\in \R$ and  $u\in \sphere$, let
$$H(u,t)=\big\{x\in \mathbb{R}^n: x\cdot u=t\big\}$$ and $H(u, t)$
is a hyperplane with normal vector $u\in \sphere$. Define the upper and lower halfspaces  $H^+(u,t)$ and $H^-(u,t)$, respectively, by the following formulas:
\begin{align*}
H^+(u,t)=\{x\in \mathbb{R}^n: x\cdot u\ge t\} \ \ \ \mathrm{and}\ \ \  H^-(u,t)=\{x\in \mathbb{R}^n: x\cdot u\le t\}.
\end{align*}

Let $C\subset \Rn$ be a fixed pointed closed convex cone. Associated
to $C$, there is a polar cone $C^\circ$ defined by $$C^\circ=\{x\in
\mathbb{R}^n: x\cdot y\le 0 \ \text{for all } y\in C\}. $$ Define
$\Omega_C$, an  open subset of $\sphere$, by  $ \Omega_C=
S^{n-1}\cap \text{int} C^\circ,$ where $\text{int} C^\circ$ is the
interior of $C^{\circ}$. It is easily checked that  the set
$H^+(u,t)\cap C$ is bounded with nonempty interior for any
$u\in\Omega_C$ and $t<0$. Moreover,  $H^+(u,0)\cap C=\{o\}$ for $u\in \Omega_C$.
Throughout the paper, let $\zeta\in S^{n-1}\setminus (S^{n-1}\cap
C^\circ)$ be fixed such that $x \cdot \zeta>0$ for all $x\in C\setminus
\{o\}$. The existence of such $\zeta$ is guaranteed by the fact that the
closed convex cone $C$ is  pointed. For simplicity, such a fixed
$\zeta\in \sphere$ will not appear in the following notations:
$$
H_t=\big\{x\in \mathbb{R}^n: x\cdot \zeta=  t\big\}, \ \   \
H^-_t=\big\{x\in \mathbb{R}^n: x\cdot \zeta\le t\big\},
$$
for $t\ge0$, $M_t= M\cap H^-_t$ for $M\subseteq C$ and for $t>0$.
Note that $M_t$ is always bounded for $t>0$. When $M=C$, we use
$C_t$ for the set $C\cap H^-_t$.

A set $L\subset\Rn$ is said to be a convex body if $L$ is a convex
compact set with nonempty interior.  Associated to a convex body $L$
is its support function $h(L, \cdot): \sphere\rightarrow \R$ which can be
formulated by $$h(L, u)=\max\big\{x\cdot u: x\in L\big\}.$$ We say that a sequence of
convex bodies $L_j, j\in \mathbb{N},$ converges to a convex body
$L_0$ in terms of the Hausdorff metric if $h(L_j, \cdot) \rightarrow
h(L_0, \cdot)$ uniformly on $\sphere$ as $j\rightarrow\infty$. The
support function can be used to uniquely determine a convex body.
Similarly, the support function of a $C$-close set $A^{\bullet}$  can be
defined in a similar manner. Recall that $A^{\bullet}\subset C$ is $C$-close
if $A^{\bullet}$ is a closed convex subset and the volume of $C \setminus A^{\bullet}$
is positive and finite.  A closed convex set $A^{\bullet}\subset C$ is said to
be $C$-full if $C\setminus  A^{\bullet}$ is bounded and nonempty.

If we let $h(A^{\bullet}, \cdot):  \text{int}C^\circ \rightarrow \R$ be the
support function of a $C$-close set $A^{\bullet}$, then $h(A^{\bullet}, \cdot)$ can be
formulated by
\begin{equation}
h(A^{\bullet}, x)=\sup \big\{x \cdot y: y \in A^{\bullet}\big\}, \ \ \text{for } \
x\in \text{int}C^\circ.\label{supp-c-c}
\end{equation}
Properties for the support functions of $C$-close sets are similar
to those for the support functions of convex bodies. For example,
$h(A^{\bullet}, \cdot)$ for a $C$-close set $A^{\bullet}$ is a sublinear function in
$\text{int}C^\circ$ and has positive homogeneity of degree $1$.
Again, the support function $h(A^{\bullet}, \cdot)$ and $A^{\bullet}$ can be (uniquely)
determined by each other, in particular, we can have
\begin{equation}\label{relation-supp-close} A^{\bullet}=C\cap \bigcap_{u\in
\Omega_{C}}H^{-}(u,h(A^{\bullet}, u)),
\end{equation}
where $H(u,h(A^{\bullet}, u))$ supports $A^{\bullet}$ at some point $y\in\partial A^{\bullet}$
with outer normal vector $u$. It is easily checked that
$-\infty<h(A^{\bullet}, \cdot)<0$ on $\Omega_C$ and the maximum can be indeed obtained in
\eqref{supp-c-c}.

The convergence of a sequence of $C$-close sets  is given as follows, see e.g.,  \cite{Schneider2018}. Let $ \mathbb{N}_0= \mathbb{N}\cup\{0\}$.

\begin{definition} \label{definition-convergence}
Let $\{A^{\bullet}_j\}_{j\in \mathbb{N}_0}$ be a sequence of $C$-close sets.
If there exists $t_0>0$ such that $A^{\bullet}_j\cap C_{t_0}\neq \emptyset$
for all $j\in\mathbb{N}$, and for all  $t\ge t_0$
$$
\lim_{j\rightarrow \infty}(A^{\bullet}_j\cap C_{t})= A^{\bullet}_0\cap C_{t},
$$ in terms of the Hausdorff metric,
then $\{A^{\bullet}_j\}_{j\in \mathbb{N}}$ is said to be convergent to $A^{\bullet}_0$, which is written by $A^{\bullet}_j\rightarrow A^{\bullet}_0$  as $j\rightarrow \infty. $
\end{definition}

Note that $o\notin A^\bullet$ if a $C$-coconvex set $A=C\setminus
A^\bullet$ is nonempty. A nice thing is that $\partial
A\setminus\partial C$ coincides with $\partial A^{\bullet}\setminus
\partial C$.  This nice property provides a good way to uniquely determine
$A$ through  $\overline{h}(A,\cdot): \text{int}C^\circ\rightarrow
\R$ formulated by
\begin{equation}\label{definition-suppport}
\overline{h}(A,x)=-h(A^\bullet,x), \ \ \text{for } \ x\in
\text{int}C^\circ.
\end{equation}
Hence, $\overline{h}(A, x+y)\geq \overline{h}(A, x)+\overline{h}(A,
y)$ and $\overline{h}(A, \lambda x)=\lambda \overline{h}(A, x)$ for
$\lambda>0$ and $x, y\in \text{int}C^\circ$. Clearly,
$0<\overline{h}(A, \cdot)<\infty$ in $\text{int}C^\circ$. Following \cite{Schneider2018},
we call $\overline{h}(A,\cdot)$ the support function of $A$.  In a
similar manner, one can also define $\overline{S}_{n-1}(A, \cdot)$,
the surface area measure of $A$, as follows: for any Borel set  $ \eta
\subset \Omega_{C}$,
\begin{equation*}
\overline{S}_{n-1}(A,\eta)=S_{n-1}\left(A^{\bullet},
\eta\right)=\mathcal{H}^{n-1}(\nu^{-1}_{A^\bullet}(\eta)),
\end{equation*}
where $S_{n-1}\left(A^{\bullet}, \cdot \right)$ is the surface area
measure of $A^{\bullet}$ defined in \eqref{surface-area-close}.  It
has been proved in \cite[Lemma 1]{Schneider2018} that
\begin{equation}\label{integral formula for volume}
V_n(A)=\frac{1}{n}\int_{\Omega_C} \overline{h}(A,u)
\,d\overline{S}_{n-1}(A,u).
\end{equation}
Again, $V_n(\lambda A)=\lambda^n V_n(A)$ for all $\lambda >0$ and
clearly \begin{equation}\label{monotonicity-volume} V_n(A_1)\le
V_n(A_2)
\end{equation}
for any two $C$-coconvex sets $A_1$ and $A_2$  such that $
A_1\subseteq A_2.$ The mixed volume of two $C$-coconvex sets $A_0,
A_1$ \cite[p. 219]{Schneider2018}, denoted by
$\overline{V}(A_0,\ldots,A_0, A_1)$ or by $\overline{V}_1(A_0, A_1)$
for short, is given by
\begin{equation}\label{mixed volume}
\overline{V}_1(A_0, A_1)=\overline{V}(A_0,\ldots,A_0, A_1)=\frac{1}{n}\int_{\Omega_C}
\overline{h}(A_1,u)\,d\overline{S}_{n-1}(A_0, u).
\end{equation}
Consequently, one has the following Minkowski inequality for
$\overline{V}_1(A_0, A_1)$ (see \cite[(27)]{Schneider2018}):
\begin{equation*}
\overline{V}_1(A_0, A_1)^n\le V_n(A_0)^{n-1}V_n(A_1),
\end{equation*} with
equality if and only if $A_0=\alpha A_1$ for some $\alpha>0$.
Moreover, (\ref{mixed volume}) implies $\overline{V}_1(A, A_1)\le \overline{V}_1(A, A_2)$ for $C$-coconvex sets  $A_1, A_2, A$  such that $A_1\subseteq A_2$.

\section{The $p$-co-sum of $C$-coconvex sets} \label{p-sum}
In this section, the $p$-co-sum of two $C$-coconvex sets for $0<p<1$
will be introduced and related properties will be provided. In
particular, we prove that the $p$-co-sum of $C$-coconvex sets for
$0<p<1$ is still a $C$-coconvex set. For $\alpha>0$ and  a
$C$-coconvex set $A$, let $\alpha \circ A=\alpha^{\frac{1}{p}} A$ if
there is no confusion on the value of $p$.

\begin{definition}\label{co-sum-set-1-def}
Let $A_{1}, A_{2}$ be two $C$-coconvex sets. For $p\in(0, 1)$ and
$\alpha_1,\alpha_2\geq 0$ (not both zero), define $\overline{h}:
\mathrm{int}C^\circ\rightarrow \R$ as follows: for all $x\in
\mathrm{int}C^\circ$, \begin{equation}\label{definition-p-sum}
 \overline{h}(x)=\big(\alpha_1 \overline{h}\left(A_{1},
x\right)^{p}+\alpha_2  \overline{h}\left(A_{2},
x\right)^{p}\big)^{\frac{1}{p}}.
 \end{equation}   \end{definition}

We would like to mention that Definition \ref{co-sum-set-1-def} also
works for $p=1$. In this case, it goes back to the co-sum of two
$C$-coconvex sets  (see  e.g., \cite{Schneider2018}):
 \begin{equation}\label{definition-co-sum}
 \overline{h}\left(\alpha_1 A_{1}\oplus\alpha_2 A_{2},
x\right)=\alpha_1\overline{h}\left(A_{1},
x\right)+\alpha_2\overline{h}\left(A_{2}, x\right),\ \ \text{for}\ \
x\in \text{int}C^\circ,
 \end{equation}
where the addition $``\oplus"$ is the co-sum given by $ \alpha_1
A_{1}\oplus\alpha_2 A_{2}= C\setminus (\alpha_1
A_{1}^\bullet+\alpha_2 A^\bullet_{2}).$  The case $p=0$ will be
discussed in Section \ref{section:p=0}.

The following lemma asserts that  $-\overline{h}$ is the support
function of a closed convex set contained in $C$.

 \begin{lemma} \label{minus is convex}
Let $p\in(0, 1)$ and $\alpha_1, \alpha_2 \geq 0$ (not both zero).
If $A_{1}$ and $A_{2}$ are nonempty $C$-coconvex sets, then
$-\overline{h}$, where $\overline{h}$ is given in
\eqref{definition-p-sum}, is a sublinear function in $\mathrm{int}
C^\circ$ and  hence uniquely determines a closed convex set
contained in $C$.
 \end{lemma}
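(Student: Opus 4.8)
The plan is to verify directly the two defining properties of a sublinear function for $-\overline{h}$: positive homogeneity of degree $1$ and subadditivity. First I would observe that, since $A_1$ and $A_2$ are nonempty, the preliminaries guarantee $0<\overline{h}(A_i,\cdot)<\infty$ on $\mathrm{int}\,C^\circ$, so (as at least one $\alpha_i>0$) the quantity inside the parentheses of \eqref{definition-p-sum} is a strictly positive real, the non-integer powers are legitimate, and $\overline{h}(x)>0$ for all $x\in\mathrm{int}\,C^\circ$. Positive homogeneity is then immediate: using $\overline{h}(A_i,\lambda x)=\lambda\,\overline{h}(A_i,x)$ for $\lambda>0$, one factors $\lambda^{p}$ out of each summand in \eqref{definition-p-sum} and extracts $\lambda$ after taking the $1/p$ power, giving $\overline{h}(\lambda x)=\lambda\,\overline{h}(x)$, whence $-\overline{h}(\lambda x)=\lambda(-\overline{h}(x))$.

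The substantive step is the subadditivity of $-\overline{h}$, which is equivalent to the superadditivity of $\overline{h}$, namely
\[
\overline{h}(x+y)\ge \overline{h}(x)+\overline{h}(y),\qquad x,y\in\mathrm{int}\,C^\circ .
\]
I would establish this by introducing the auxiliary function $\Phi(s,t)=\big(\alpha_1 s^{p}+\alpha_2 t^{p}\big)^{1/p}$ for $s,t>0$, so that $\overline{h}(x)=\Phi\big(\overline{h}(A_1,x),\overline{h}(A_2,x)\big)$, and then chaining two facts. Writing $a_i=\overline{h}(A_i,x)$ and $b_i=\overline{h}(A_i,y)$, the superadditivity of each support function $\overline{h}(A_i,\cdot)$ recorded after \eqref{definition-suppport} gives $\overline{h}(A_i,x+y)\ge a_i+b_i$. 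Since $\Phi$ is nondecreasing in each argument (as $\alpha_i\ge 0$), this already yields $\overline{h}(x+y)\ge\Phi(a_1+b_1,\,a_2+b_2)$, so it remains only to prove that $\Phi$ is itself superadditive, $\Phi(a_1+b_1,a_2+b_2)\ge\Phi(a_1,a_2)+\Phi(b_1,b_2)$, which closes the chain.

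The crux is exactly this superadditivity of $\Phi$, and it is here that the restriction $0<p<1$ is essential. Absorbing the weights by setting $\tilde a_i=\alpha_i^{1/p}a_i$ and $\tilde b_i=\alpha_i^{1/p}b_i$, the claim becomes the reverse Minkowski inequality for the $\ell^{p}$ functional with exponent $0<p<1$, i.e. $\|\tilde a+\tilde b\|_{p}\ge\|\tilde a\|_{p}+\|\tilde b\|_{p}$ for nonnegative vectors; equivalently, $\Phi$ is positively homogeneous of degree $1$ and concave on the positive quadrant, and therefore superadditive. (In the two-variable situation needed here the inequality reduces, after squaring, to an application of the AM--GM inequality.) I expect this reverse-Minkowski/concavity step to demand the most care, both because the direction of the inequality is opposite to the familiar $p\ge 1$ case and because one must track the positivity of every quantity to keep the fractional powers meaningful.

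Finally, once $-\overline{h}$ is known to be sublinear on $\mathrm{int}\,C^\circ$ and strictly negative there, the closed convex set it determines is produced exactly as in \eqref{relation-supp-close}: I would set
\[
K=C\cap\bigcap_{u\in\Omega_C}H^{-}\big(u,\,-\overline{h}(u)\big),
\]
an intersection of the cone $C$ with closed halfspaces, hence a closed convex subset of $C$. Because $-\overline{h}$ is sublinear and negative on $\mathrm{int}\,C^\circ$, the standard correspondence between sublinear functions and $C$-close sets used in the preliminaries shows that $-\overline{h}$ is the support function of $K$ and that $K$ is the unique closed convex set in $C$ with this property, which is the asserted unique determination.
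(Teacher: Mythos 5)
Your proposal is correct and follows essentially the same route as the paper: positivity and degree-one homogeneity are checked directly, and the key step is the reverse Minkowski inequality for the functional $(\alpha_1 s^p+\alpha_2 t^p)^{1/p}$ with $0<p<1$ combined with the superadditivity of each $\overline{h}(A_i,\cdot)$, which is exactly the paper's inequality \eqref{triangle} (your splitting into monotonicity plus superadditivity of $\Phi$ is the same estimate, and superadditivity of the homogeneous $\overline{h}$ is equivalent to the concavity the paper verifies). The concluding construction of the closed convex set as $C\cap\bigcap_{u\in\Omega_C}H^{-}(u,-\overline{h}(u))$ also matches \eqref{def-co-p-sum-2-2-2}.
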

\begin{proof}
First of all, as $A_1$ and $A_2$ are nonempty $C$-coconvex sets,
then $\overline{h}\left(A_{1}, x\right)>0$  and
$\overline{h}\left(A_{2}, x\right)>0$ for all $x\in
\mathrm{int}C^\circ$. As $\alpha_1 , \alpha_2\geq 0$ are not both
zero,  it is trivial to have $\overline{h}(x)>0$ for all $x\in
\mathrm{int}C^\circ$.

Secondly, for any $x\in \text{int}C^\circ$ and $t>0$,
\begin{eqnarray*}
\overline{h}\left( t x\right) =\big(\alpha_1\overline{h}\left(A_{1},
t x\right)^{p}+\alpha_2\overline{h}\left(A_{2}, t
x\right)^{p}\big)^{\frac{1}{p}}
=t\big(\alpha_1\overline{h}\left(A_{1},
x\right)^{p}+\alpha_2\overline{h}\left(A_{2},
x\right)^{p}\big)^{\frac{1}{p}}=t \overline{h}\left(x\right).
\end{eqnarray*}
That is, $\overline{h}$ and hence $-\overline{h}$ have
positive homogeneity of degree $1$ in $\text{int}C^\circ$.

Thirdly, let $T_{p}\left(a_{1},
a_{2}\right)=\left(a_{1}^{p}+a_{2}^{p}\right)^{\frac{1}{p}}$ for
positive numbers $a_1, a_2\in \mathbb{R}^{+}$ and $p\in(0,1)$. The
inverse Minkowski's inequality yields that, for any positive numbers
$a_{i}, b_{i}, c_{i}\in \mathbb{R}^{+} $  such that $a_{i}\geq
b_{i}+c_{i}$ for $i=1,2$,
\begin{equation} \label{triangle}
T_{p}\left(a_{1},a_{2}\right)\geq
T_{p}\left(b_{1},b_{2}\right)+T_{p}\left(c_{1},c_{2}\right).
\end{equation} Since $A_1, A_2$ are two $C$-coconvex sets,  for
$\lambda\in[0,1]$ and  $x,y\in \text{int}C^\circ$, one has
$$
h\left(C\setminus (\alpha^{1/ p}_i A_i), \lambda
x+(1-\lambda)y\right)\leq h\left(C\setminus (\alpha^{1/ p}_i A_i),
\lambda x\right)+ h\left(C\setminus (\alpha^{1/ p}_i A_i),
(1-\lambda) y\right),
$$
for $i=1,2$, or equivalently
$$
\overline{h}\left(\alpha^{1/ p}_i A_i, \lambda
x+(1-\lambda)y\right)\geq \overline{h}\left(\alpha^{1/ p}_i A_i,
\lambda x\right)+ \overline{h}\left(\alpha^{1/ p}_i A_i, (1-\lambda)
y\right).
$$
 For convenience, for $i=1,2$,  let
\begin{align*}
a_i=\overline{h}\big(\alpha^{1/ p}_i A_i, \lambda
x+(1-\lambda)y\big),  \ \
b_i =\overline{h}\big(\alpha^{1/ p}_i A_i, \lambda
x\big) \ \  \mathrm{and} \ \
c_i =\overline{h}\big(\alpha^{1/ p}_i A_i, (1-\lambda)y\big).
\end{align*}
 Employing (\ref{triangle}), one has
\begin{align*}
\overline{h}\left(\lambda x+(1-\lambda) y\right)
&=T_p(a_1, a_2)\ge T_{p}\left(b_{1},b_{2}\right)+T_{p}\left(c_{1},c_{2}\right) \\
&= \lambda\big(\alpha_1\overline{h}\left(A_{1},
x\right)^{p}+\alpha_2\overline{h}\left(A_{2},
x\right)^{p}\big)^{\frac{1}{p}}+(1-\lambda)\big(\alpha_1\overline{h}\left(A_{1},
y\right)^{p}+\alpha_2\overline{h}\left(A_{2},
y\right)^{p}\big)^{\frac{1}{p}}\\
&=\lambda \overline{h}(x)+ (1-\lambda) \overline{h}(y).
\end{align*}
Thus $ \overline{h}$ is concave and hence $-
\overline{h}$ is convex in $\text{int}C^\circ$.   Let
\begin{equation} \label{def-co-p-sum-2-2-2} (\copsum)^\bullet =C\cap
\bigcap_{u\in \Omega_{C}}H^{-}\Big(u,
-\overline{h}(u)\Big).\end{equation}  Clearly $(\copsum)^\bullet$ is
a closed convex set contained in $C$. Moreover, $o\notin
(\copsum)^\bullet $  as $-\overline{h}(x)<0$ for all $x\in
\mathrm{int}C^{\circ}$.    \end{proof}

Based on \eqref{def-co-p-sum-2-2-2}, we define
\begin{equation}\label{p-co-sum--1}
\copsum=C\setminus (\copsum)^\bullet.
\end{equation}
For convenience, we (formally) let
$\overline{h}(\copsum, \cdot)=\overline{h}$,  and
$\overline{h}$ becomes the support function of a $C$-coconvex set
once $\copsum$ is proved to be a $C$-coconvex set. To fulfill this
goal, one needs to prove that $V_n(\copsum)$ is finite and positive.
The following lemma is required.
\begin{lemma}\label{inclusion-1}
Let $A_{1}, A_{2}$ be $C$-coconvex sets, and $p\in (0, 1)$. For
$0<\lambda<1$, one has \begin{equation} \label{1.3} (1-\lambda)\circ
A_{1}\oplus_{p}\lambda\circ A_{2} \subseteq (1-\lambda) A_{1}\oplus
\lambda A_{2},
\end{equation}
with equality if and only if $A_{1}=A_{2}$.
\end{lemma}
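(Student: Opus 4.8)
The plan is to reduce the claimed inclusion \eqref{1.3} to a pointwise comparison of the support functions of the two $C$-coconvex sets on $\Omega_C$, where it becomes the elementary inequality between a weighted $p$-power mean and the corresponding arithmetic mean. Since a $C$-coconvex set is completely encoded by its support function, once the support-function inequality is in hand the inclusion follows almost immediately from the defining intersections of the associated $C$-close sets.

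First I would record the two support functions. Writing $a(u)=\overline{h}(A_1,u)$ and $b(u)=\overline{h}(A_2,u)$, both strictly positive on $\Omega_C$, Definition \ref{co-sum-set-1-def} applied with $\alpha_1=1-\lambda$ and $\alpha_2=\lambda$ (using $\overline{h}(\alpha\circ A,u)^p=\alpha\,\overline{h}(A,u)^p$, which follows from $\alpha\circ A=\alpha^{1/p}A$) gives
\[
\overline{h}\big((1-\lambda)\circ A_1\oplus_{p}\lambda\circ A_2,u\big)=\big((1-\lambda)a(u)^p+\lambda b(u)^p\big)^{1/p},
\]
while \eqref{definition-co-sum} gives $\overline{h}\big((1-\lambda)A_1\oplus\lambda A_2,u\big)=(1-\lambda)a(u)+\lambda b(u)$. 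The analytic heart is then that, because $0<p<1$, the map $t\mapsto t^p$ is strictly concave on $(0,\infty)$, so Jensen's inequality (equivalently the power-mean inequality $M_p\le M_1$) yields, for each $u\in\Omega_C$,
\[
\big((1-\lambda)a(u)^p+\lambda b(u)^p\big)^{1/p}\le(1-\lambda)a(u)+\lambda b(u),
\]
i.e. $\overline{h}\big((1-\lambda)\circ A_1\oplus_{p}\lambda\circ A_2,u\big)\le\overline{h}\big((1-\lambda)A_1\oplus\lambda A_2,u\big)$ on $\Omega_C$. To convert this into \eqref{1.3} I would use the defining intersection \eqref{def-co-p-sum-2-2-2} for the left-hand set together with \eqref{relation-supp-close} for the right-hand co-sum: the support-function inequality means $-\overline{h}$ is pointwise larger on the left, so each halfspace $H^-\big(u,-\overline{h}(\cdot,u)\big)$ is larger for the left set; intersecting over $u\in\Omega_C$ shows the left $C$-close set contains the right one, and passing to complements in $C$ reverses the inclusion to give exactly \eqref{1.3}.

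For the equality case, if $A_1=A_2$ both displayed support functions collapse to $\overline{h}(A_1,\cdot)$, so the two sets coincide. Conversely, equality of the sets forces equality of their support functions at every $u\in\Omega_C$; by the strict concavity of $t\mapsto t^p$ and $\lambda\in(0,1)$, the Jensen inequality above is an equality at a given $u$ precisely when $a(u)=b(u)$. Hence equality for all $u$ gives $\overline{h}(A_1,\cdot)=\overline{h}(A_2,\cdot)$ on $\Omega_C$, and since the support function determines a $C$-coconvex set this yields $A_1=A_2$.

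I do not expect a genuine obstacle here, as the core inequality is the standard power-mean inequality. The only point requiring care is bookkeeping the direction of the correspondence between a $C$-coconvex set and its support function, namely that a pointwise larger $\overline{h}(A,\cdot)$ corresponds to a smaller $C$-close set $A^\bullet$ and hence a larger $C$-coconvex set $A$; and checking that the pointwise strict-concavity equality condition correctly upgrades to the global characterization $A_1=A_2$.
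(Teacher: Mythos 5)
Your proposal is correct and follows essentially the same route as the paper: the pointwise power-mean (strict concavity of $t\mapsto t^p$) comparison of the two support functions on $\Omega_C$, converted into an inclusion of the associated $C$-close sets via the defining halfspace intersections and then complemented in $C$, with the equality case read off from the strict-concavity equality condition. No gaps; the bookkeeping of directions (larger $\overline{h}(A,\cdot)$ means smaller $A^{\bullet}$ and larger $A$) is handled correctly.
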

\begin{proof} First of all, the function $t^{p}$ is strictly concave on $t\in(0,+\infty)$
if $0<p<1$. Thus,
\begin{equation*}
\big((1-\lambda)\overline{h}(A_{1},u)+\lambda
\overline{h}(A_{2},u)\big)^{p}\geq
(1-\lambda)\overline{h}(A_{1},u)^{p}+\lambda
\overline{h}(A_{2},u)^{p},
\end{equation*}
for all $u\in \Omega_C$. Equality holds if and only if
$\overline{h}(A_{1},u)=\overline{h}(A_{2},u)$ for all $u\in
\Omega_C$, namely, $A_1=A_2$.
 This, together with
(\ref{definition-p-sum}) and (\ref{definition-co-sum}), yields that
\begin{align*}
\overline{h}\left((1-\lambda) A_{1}\oplus\lambda  A_{2},u\right)&=
(1-\lambda)\overline{h}\left(A_{1},u\right)+\lambda
\overline{h}\left(A_{2},u\right)\\
&\geq \big((1-\lambda)\overline{h}\left(A_{1},u\right)^{p}+\lambda
\overline{h}\left(A_{2},u\right)^{p}\big)^{\frac{1}{p}}\\
&=\overline{h}((1-\lambda)\circ A_{1}\oplus_{p}\lambda\circ A_{2},
u).
\end{align*}
It follows from \eqref{def-co-p-sum-2-2-2} that
\begin{eqnarray*}   ((1-\lambda)\circ A_{1}\oplus_{p}\lambda\circ
A_{2})^\bullet & =& C\cap \bigcap_{u\in \Omega_{C}}H^{-}\Big(u,
-\overline{h}((1-\lambda)\circ A_{1}\oplus_{p}\lambda\circ A_{2},
u)\Big)\\  &\supseteq & C\cap \bigcap_{u\in \Omega_{C}}H^{-}\Big(u,
-\overline{h}\left((1-\lambda)  A_{1}\oplus\lambda
A_{2},u\right)\Big)\\ &=& \big((1-\lambda)  A_{1}\oplus\lambda
A_{2}\big)^{\bullet}.
\end{eqnarray*}
Thus, the desired argument \eqref{1.3} follows immediately from
\eqref{p-co-sum--1}:
\begin{eqnarray*}
(1-\lambda)\circ A_{1}\oplus_{p}\lambda\circ A_{2}
&=&C\setminus ((1-\lambda)\circ A_{1}\oplus_{p}\lambda\circ A_{2})^\bullet\\
&\subseteq & C\setminus \big((1-\lambda)  A_{1}\oplus\lambda
A_{2}\big)^{\bullet} \\ &=&  (1-\lambda)  A_{1}\oplus\lambda
A_{2}.\end{eqnarray*} Clearly, equality holds in \eqref{1.3} if and
only if $A_{1}=A_{2}$.
\end{proof}

 We now prove the  $L_p$ Brunn-Minkowski inequality
for $C$-coconvex sets for $0<p<1$. Note that the case $p=1$ (i.e., inequality \eqref{B-M-no-const}) has been discussed in \cite{Schneider2018}. The case $p=0$ will be discussed in Section \ref{section:p=0}.

\bt \label{p-B-M-I}
Let $A_{1}, A_{2}$ be $C$-coconvex sets. If $0<p<1$, then
\begin{equation}\label{1.5}
V_{n}(A_{1}\oplus_{p}A_{2})^{\frac{p}{n}}\leq
V_{n}(A_{1})^{\frac{p}{n}}+V_{n}(A_{2})^{\frac{p}{n}},
\end{equation}
with equality if and only if $A_{1}=\alpha A_{2}$ for some
$\alpha>0$.
\et

\begin{proof}
Let  $0<\lambda<1$. By (\ref{monotonicity-volume}), Lemma
\ref{inclusion-1},   and the Brunn-Minkowski inequality
(\ref{brunn-minkowski ineq}), we have
\begin{align*}
V_{n}(A_{1}\oplus_{p}A_{2})^{\frac{1}{n}}
&=V_{n}\left((1-\lambda)\circ( (1-\lambda)^{-\frac{1}{p}}
A_{1})\oplus_{p}\lambda\circ(\lambda^{-\frac{1}{p}} A_{2})\right)^{\frac{1}{n}}\\
&\leq V_{n}\left((1-\lambda)(1-\lambda)^{-\frac{1}{p}}A_{1}\oplus
\lambda\lambda^{-\frac{1}{p}}A_{2}\right)^{\frac{1}{n}}\\
&\leq(1-\lambda)V_{n}\left((1-\lambda)^{-\frac{1}{p}}A_{1}\right)^{\frac{1}{n}}+\lambda
V_{n}(\lambda^{-\frac{1}{p}}A_{2})^{\frac{1}{n}}\\
&=(1-\lambda)^{1-\frac{1}{p}}V_{n}(A_{1})^{\frac{1}{n}}
+\lambda^{1-\frac{1}{p}}V_{n}(A_{2})^{\frac{1}{n}}.
\end{align*}   Let
$\lambda=\dfrac{V_{n}(A_{2})^{\frac{p}{n}}}{V_{n}(A_{1})^{\frac{p}{n}}+V_{n}(A_{2})^{\frac{p}{n}}}$. Then
\begin{align*}
V_{n}(A_{1}\oplus_{p}A_{2})^{\frac{1}{n}}
\leq \dfrac{V_{n}(A_{1})^{\frac{p}{n}}+V_{n}(A_{2})^{\frac{p}{n}}}{\big(V_{n}(A_{1})^{\frac{p}{n}}
+V_{n}(A_{2})^{\frac{p}{n}}\big)^{1-\frac{1}{p}}}=\big(V_{n}(A_{1})^{\frac{p}{n}}+V_{n}(A_{2})^{\frac{p}{n}}\big)^{\frac{1}{p}}.
\end{align*} After rearrangement, one gets \eqref{1.5} as desired. The
equality condition, namely  $A_{1}=\alpha A_{2}$ for some
$\alpha>0$, follows from the equality conditions of
(\ref{brunn-minkowski ineq}) and (\ref{1.3}). In particular,
\begin{equation*}
V_{n}(A_{1}\oplus_{p}A_{2})^{\frac{p}{n}}=V_{n}((\alpha A_{2})\oplus_{p}A_{2})^{\frac{p}{n}}
=(1+\alpha^p) V_{n}(A_{2})^{\frac{p}{n}}=
V_{n}(A_{1})^{\frac{p}{n}}+V_{n}(A_{2})^{\frac{p}{n}}
\end{equation*} if $A_{1}=\alpha A_{2}$ for some $\alpha>0$.
\end{proof}

Indeed, it follows from Lemma \ref{minus is convex} and Theorem
\ref{p-B-M-I}  that  if $0<p<1$ and
$\alpha_1,\alpha_2>0$, then
\begin{eqnarray*}
V_n(\copsum)^{\frac{p}{n}} =
V_n\Big(\alpha_1^{\frac{1}{p}}A_1\oplus_p
\alpha_2^{\frac{1}{p}}A_2\Big)^{\frac{p}{n}} \leq
\alpha_1V_n(A_1)^{\frac{p}{n}} + \alpha_2V_n(A_2)^{\frac{p}{n}}
<\infty \end{eqnarray*} for any two $C$-coconvex sets $A_{1},
A_{2}$.  On the other hand, $\overline{h}(\copsum, x)>0$  for all
$x\in \mathrm{int}C^{\circ}$ as $\overline{h}(A_1, x)>0$ and
$\overline{h}(A_2, x)>0$, one has $V_n(\copsum)>0$. This observation
can be summarized as the following theorem.

 \bt \label{co-sum-set-1-def-theo} Let $A_{1}, A_{2}$ be two
$C$-coconvex sets. For $p\in(0, 1)$ and $\alpha_1,\alpha_2\geq 0$
(not both zero), the set $\copsum$ given in \eqref{p-co-sum--1} does
define a nonempty $C$-coconvex set, whose support function is $\overline{h}$
given by  \eqref{definition-p-sum}, namely, for all $x\in
\mathrm{int}C^\circ$,
\begin{equation*}\label{definition-p-sum-4-19}
 \overline{h}(\copsum, x)=\big(\alpha_1 \overline{h}\left(A_{1},
x\right)^{p}+\alpha_2  \overline{h}\left(A_{2},
x\right)^{p}\big)^{\frac{1}{p}}.
 \end{equation*}
 \et

\section{A variational formula of the volume related to the
$p$-co-sum}\label{variational}

In this section, we will prove a variational formula of the volume
of the Wulff shape associated with a family of  functions obtained from the
$p$-co-sum. Motivated by this variational formula, the $L_p$ surface area measure can be introduced and the related $L_p$
 Minkowski problem can be posed.  Recall that for a
$C$-close set $A^{\bullet}$, the surface area measure $S_{n-1}(A^{\bullet}, \cdot)$ defined on
$\Omega_C$  could be infinite. In order to better deal with the
surface area measure $S_{n-1}(A^{\bullet}, \cdot)$ and related Minkowski problems,
we will concentrate on a special class of $C$-close sets, namely
the sets that are $C$-determined by $\omega$.

Throughout the rest of the paper, we always let $\omega\subset
\Omega_C$ be a nonempty and compact set. As in the Brunn-Minkowski
theory for  convex bodies, Schneider in  \cite{Schneider2018}
introduced the $C$-coconvex analogue of the Wulff shape, which
provides a powerful tool in establishing the variational formula
regarding the co-sum  and plays essential roles in finding solutions
to the Minkowski problem that aims to characterize the surface area
measures of $C$-coconvex sets. For $f:\omega\rightarrow \mathbb{R}$
a positive and continuous function on $\omega$, define the Wulff
shape associated with $(C, \omega, f)$ to be a closed convex set of
the following form:
\begin{equation*}A^{\bullet}_f=C \cap \bigcap_{u \in \omega} H^{-}(u,-f(u)).
\end{equation*}
The Wulff shape $A^{\bullet}_f$ is a $C$-full set and $C\setminus  A^{\bullet}_f $ is
bounded and nonempty. Please see Figure \ref{fig-1-picture} for a
typical Wulff shape in $\R^2$.
\begin{figure}[htbp] \centering
\includegraphics[width=9cm]{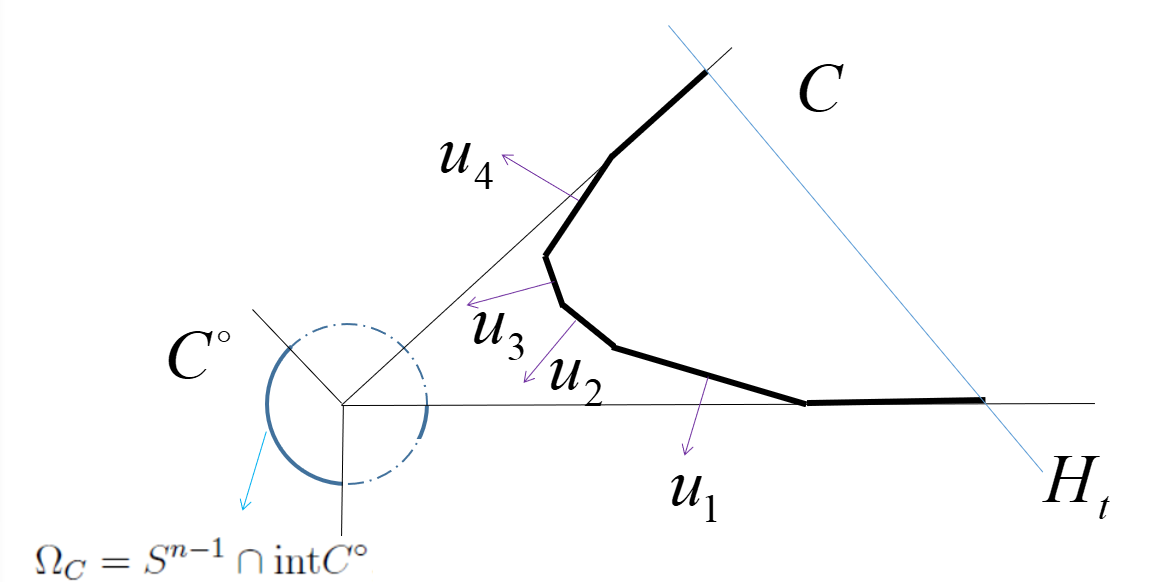}
\caption{A Wulff shape associated with $(C, \omega, f)$
for $\omega=\{u_1, u_2, u_3, u_4\}$ and $f: \omega \rightarrow (0, \infty)$ in
$\mathbb{R}^2$. The line $H_t$ is to illustrate Lemma \ref{lemma-bounded}.} \label{fig-1-picture}
\end{figure}

We also would like to mention that the Wulff shape $A^{\bullet}_f$ is
$C$-determined by $\omega$. Hereafter, a  closed convex set
$A^{\bullet}\subseteq C$ of the following form
$$
 A^{\bullet}=C\cap\bigcap_{u\in\omega}H^{-}(u,
h(A^{\bullet},u))
$$
is called  $C$-determined by $\omega$. The collection of  all closed
convex sets that are $C$-determined by $\omega$ is denoted by
$\mathcal{K}(C,\omega).$ Clearly,  $aA^{\bullet}\in \mathcal{K}(C,\omega)$
for any $a>0$ and for any $A^{\bullet}\in
\mathcal{K}(C,\omega)$. It was proved in \cite{Schneider2018} that if
$A^{\bullet}\in\mathcal{K}(C,\omega)$, then $A=C\setminus A^{\bullet}$
is bounded and $\overline{S}_{n-1}(A, \omega)=S_{n-1}(A^{\bullet},
\omega)$ is finite.

We summarize some important properties of the Wulff shape into the
following lemma for the easy future citation. Please read \cite[pp.
220-221]{Schneider2018} for more details.
\begin{lemma}\label{zero surface area}
Let $f: \omega\rightarrow \mathbb{R}$ be  a positive and continuous
function on $\omega$. Then \begin{equation}\label{compare functions}
h(A^{\bullet}_f,u)\le -f(u)\ \ \text{for}\ u\in\omega,
 \end{equation}    $h(A^{\bullet}_f, u) =-f(u)$ almost
everywhere with respect to the surface area measure $S_{n-1}(A^{\bullet}_f,
\cdot)$, $S_{n-1}(A^{\bullet}_f,\Omega_C\setminus\omega)=0$, and
 \begin{equation}\label{vol-func-1}  V_n(f):=
V_n(C\setminus A^{\bullet}_f)=\frac{1}{n}\int_\omega f(u) \,d S_{n-1}(A^{\bullet}_f, u).
 \end{equation}
\end{lemma}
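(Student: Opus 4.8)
The plan is to treat the four assertions in order and to obtain the volume formula \eqref{vol-func-1} at the end by combining the first three with the integral representation \eqref{integral formula for volume}. Inequality \eqref{compare functions} is immediate from the construction: since $A^{\bullet}_f = C \cap \bigcap_{u\in\omega} H^-(u,-f(u))$, every $x \in A^{\bullet}_f$ satisfies $x\cdot u \le -f(u)$ for each $u \in \omega$, so taking the supremum over $x \in A^{\bullet}_f$ in the definition \eqref{supp-c-c} of the support function gives $h(A^{\bullet}_f, u) \le -f(u)$ for all $u \in \omega$.

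The heart of the matter is to prove simultaneously that $h(A^{\bullet}_f, u) = -f(u)$ for $S_{n-1}(A^{\bullet}_f,\cdot)$-almost every $u$ and that $S_{n-1}(A^{\bullet}_f, \Omega_C \setminus \omega) = 0$; both rest on a single observation about the Gauss map. Because the singular boundary points of a convex set carry no $(n-1)$-dimensional Hausdorff measure, $S_{n-1}(A^{\bullet}_f,\cdot)$ is concentrated on the normals arising at regular points $x_0 \in \partial A^{\bullet}_f \cap \mathrm{int}\, C$, at which the outer normal $u_0$ is unique. At such an $x_0$, the compactness of $\omega$ together with the continuity of $v \mapsto x_0\cdot v + f(v)$ prevents $x_0$ from lying in the interior of $\bigcap_{v\in\omega} H^-(v,-f(v))$, so at least one defining constraint is saturated, say $x_0 \cdot v_0 = -f(v_0)$ with $v_0 \in \omega$. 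Then $H(v_0, -f(v_0))$ supports $A^{\bullet}_f$ at $x_0$ with outer normal $v_0$, and uniqueness of the normal forces $u_0 = v_0 \in \omega$. This yields both conclusions at once: the normals charged by the measure lie in the compact, hence closed, set $\omega$, so $S_{n-1}(A^{\bullet}_f, \Omega_C\setminus\omega) = 0$; and along these directions $h(A^{\bullet}_f, u_0) = x_0 \cdot u_0 = -f(u_0)$, so \eqref{compare functions} holds with equality $S_{n-1}(A^{\bullet}_f,\cdot)$-almost everywhere. These are exactly the Wulff-shape properties recorded in \cite[pp. 220--221]{Schneider2018}, which I would invoke for the remaining technical details.

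With these facts in hand, the volume formula \eqref{vol-func-1} follows by specializing \eqref{integral formula for volume} to $A = C \setminus A^{\bullet}_f$. Using $\overline{h}(A,u) = -h(A^{\bullet}_f, u)$ from \eqref{definition-suppport} and $\overline{S}_{n-1}(A,\cdot) = S_{n-1}(A^{\bullet}_f,\cdot)$, one obtains
\[
V_n(C\setminus A^{\bullet}_f) = \frac{1}{n}\int_{\Omega_C} \big(-h(A^{\bullet}_f, u)\big)\, dS_{n-1}(A^{\bullet}_f, u).
\]
By the concentration statement the integral reduces to one over $\omega$, and by the almost-everywhere equality $-h(A^{\bullet}_f, u) = f(u)$ there, so substituting gives $V_n(f) = \frac{1}{n}\int_\omega f(u)\, dS_{n-1}(A^{\bullet}_f, u)$. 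The only genuinely nontrivial step is the middle paragraph --- the a.e.\ equality together with the concentration of the surface area measure on $\omega$, where the structure of the Wulff shape and the regularity of the Gauss map enter; the remaining assertions are then formal.
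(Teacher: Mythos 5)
Your proposal is correct: the derivation of \eqref{compare functions} from the definition of the Wulff shape, the saturation argument at regular boundary points (using compactness of $\omega$ and continuity of $f$ to force an active constraint, then uniqueness of the normal to conclude $u_0\in\omega$ and $h(A^{\bullet}_f,u_0)=-f(u_0)$), and the reduction of \eqref{vol-func-1} to \eqref{integral formula for volume} are all sound. The paper itself offers no proof of this lemma --- it records these facts and refers to Schneider's paper, pp.~220--221 --- and your sketch is precisely the standard argument behind that citation, so it is consistent with the paper's approach.
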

Note that if $f=-h(A^{\bullet}, \cdot)$ for some $A^{\bullet}$ which is $C$-full and
$C$-determined by $\omega$, then  $A^{\bullet}_f=A^{\bullet}$ due to
\eqref{relation-supp-close}; in this case, one has
\begin{eqnarray*} \label{volume-support} V_n\big(-h(A^{\bullet}, \cdot)\big)= V_n(C\setminus
A^{\bullet})=\frac{1}{n}\int_\omega -h(A^{\bullet}, u) \,d S_{n-1}(A^{\bullet}, u).
 \end{eqnarray*}
It can be easily checked that $V_n(f_j)\rightarrow V_n(f)$  if
$f_j\rightarrow f$ uniformly on $\omega$. It has been proved in
\cite[Lemma 5]{Schneider2018} that the Wulff shape $A^{\bullet}_{f_j}$
associated with $(C, \omega, f_j)$  for $j\in \mathbb{N}$ converges
to the  Wulff shape $A^{\bullet}_{f_0}$ associated with $(C, \omega, f_0)$ if
the sequence of positive and continuous functions $f_j:
\omega\rightarrow \R$ converges uniformly to the positive and
continuous function $f_0: \omega\rightarrow \R$.  Moreover,
\cite[Lemma 6]{Schneider2018} asserts that if $\{A^{\bullet}_j\}_{j\ge1}$ is a
sequence in $\mathcal{K}(C,\omega)$ such that $A^{\bullet}_j\rightarrow A^{\bullet}_0$
for some $C$-full set $A^{\bullet}_0$, then $A^{\bullet}_0\in\mathcal{K}(C,\omega)$.

The following lemma is extracted from \cite[Lemma 8]{Schneider2018}.
See Figure \ref{fig-1-picture} for an illustration.
\begin{lemma} \label{lemma-bounded}
There is a constant $t>0$ with the following property: if  $A^{\bullet}\in
\mathcal{K}(C,\omega)$ and $V_n(C\setminus A^{\bullet})=1$, then $ C\cap H_t
\subset A^{\bullet}.$
\end{lemma}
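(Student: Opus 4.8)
The plan is to establish the contrapositive, geometric form of the statement: the inclusion $C\cap H_t\subseteq A^\bullet$ is equivalent to saying that the coconvex set $A:=C\setminus A^\bullet$ does not reach the level $t$ in the fixed direction $\zeta$ (i.e. $A\cap H_t=\emptyset$), since $C$ is the disjoint union of $A$ and $A^\bullet$. I would therefore prove a \emph{uniform} volume bound measuring how high $A$ can reach: there exist constants $\gamma,v_0>0$ depending only on $C$ and $\omega$ (never on the individual $A^\bullet$) such that every $z\in A$ satisfies $V_n(A)\ge v_0\,(\gamma\,(z\cdot\zeta))^n$. Granting this, the normalization $V_n(A)=1$ forces $z\cdot\zeta<t:=(\gamma\,v_0^{1/n})^{-1}$ for every $z\in A$, whence $A\cap H_t=\emptyset$ and $C\cap H_t\subseteq A^\bullet$, as required.

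To obtain the bound, fix $z\in A$ and set $\tau=z\cdot\zeta$. Because $A^\bullet\in\mathcal{K}(C,\omega)$ is $C$-determined by $\omega$, i.e. $A^\bullet=C\cap\bigcap_{u\in\omega}H^-(u,h(A^\bullet,u))$, the point $z\in C\setminus A^\bullet$ must violate one of the defining halfspaces: there is $u\in\omega$ with $z\cdot u>h(A^\bullet,u)=-\overline{h}(A,u)=:-s$, where $s=\overline{h}(A,u)>0$. Since $A^\bullet\subseteq H^-(u,-s)$, the entire open cap $C\cap\{x:x\cdot u>-s\}$ lies in $A$, and it has the same volume as the closed cap $C\cap H^+(u,-s)$, so $V_n(A)\ge V_n(C\cap H^+(u,-s))$. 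By homogeneity of the cone, $C\cap H^+(u,-s)=s\,(C\cap H^+(u,-1))$, giving $V_n(C\cap H^+(u,-s))=s^n\,V_n(C\cap H^+(u,-1))$. I would then use the clean uniform inclusion $C\cap B^n_2\subseteq C\cap H^+(u,-1)$ valid for \emph{every} unit vector $u$ (indeed $\|x\|\le1$ gives $x\cdot u\ge-1$), so that $V_n(C\cap H^+(u,-1))\ge v_0:=V_n(C\cap B^n_2)>0$; the cap is bounded with nonempty interior by the property recorded in Section~\ref{prepar}. Thus $V_n(A)\ge v_0\,s^n$.

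It remains to bound $s$ from below by a multiple of $\tau$. Writing $z=\tau w$ with $w\in C\cap H_1$ (legitimate as $\tau>0$), the inequality $z\cdot u>-s$ reads $s>-\tau\,(w\cdot u)$. Here compactness enters: the set $\{(w,u):w\in C\cap H_1,\ u\in\omega\}$ is compact, and $w\cdot u$ is continuous and \emph{strictly} negative on it, because $u\in\omega\subseteq\mathrm{int}\,C^\circ$ forces $u\cdot w<0$ for all $w\in C\setminus\{o\}$; hence $\gamma:=-\max\{w\cdot u\}>0$ and $s>\gamma\tau$. Combining with the previous paragraph yields $1=V_n(A)\ge v_0\,s^n>v_0\,(\gamma\tau)^n$, i.e. $\tau<(\gamma\,v_0^{1/n})^{-1}$, and taking $t=(\gamma\,v_0^{1/n})^{-1}$ finishes the argument.

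The crux is the \emph{uniformity} of $\gamma$ and $v_0$ across the whole family $\{A^\bullet\in\mathcal{K}(C,\omega):V_n(C\setminus A^\bullet)=1\}$ — the lemma is only useful because the single $t$ must serve every such $A^\bullet$. Both constants survive uniformly precisely because $\omega$ is compact inside the \emph{open} set $\Omega_C=S^{n-1}\cap\mathrm{int}\,C^\circ$: this keeps $w\cdot u$ bounded away from $0$ (giving $\gamma>0$) and, together with the elementary ball-in-cap inclusion, prevents the caps from degenerating. The care, then, is to phrase every estimate so that it depends on $C$ and $\omega$ alone and never on the particular set $A^\bullet$ or its support function.
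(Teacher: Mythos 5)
Your proof is correct, and every constant in it ($v_0=V_n(C\cap B^n_2)$ and $\gamma=-\max\{w\cdot u: w\in C\cap H_1,\ u\in\omega\}$) genuinely depends only on $C$, $\omega$ and $\zeta$, which is the whole point of the lemma. For the record, the paper does not prove this statement at all: it is imported verbatim from Lemma~8 of \cite{Schneider2018}, so there is no internal proof to compare against; your reconstruction --- a point $z\in C\setminus A^{\bullet}$ at height $\tau=z\cdot\zeta$ must violate some halfspace $H^{-}(u,-s)$ with $u\in\omega$, which places a cap of volume at least $v_0 s^{n}$ inside $C\setminus A^{\bullet}$, while compactness of $\omega$ inside the open set $\Omega_C$ together with compactness of $C\cap H_1$ forces $s\ge\gamma\tau$ uniformly --- is a correct, self-contained argument and is essentially the natural (and Schneider's) route. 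The only cosmetic remark is that the case $z=o$ should be set aside explicitly (there $\tau=0$ and the volume bound is vacuous), but this does not affect the conclusion.
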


The following lemma is the weak convergence of surface area measures
defined on $\omega$. Although its proof has already  been given in
the proof of  \cite[Lemma 7]{Schneider2018}, we still provide the
detailed proof here for completeness and for the convenience of
future citation.
\begin{lemma}\label{weak convergence}
Let $\{A^\bullet_i\}_{i\ge1}\subset\mathcal{K}(C,\omega)$ and
$A^\bullet_0\in\mathcal{K}(C,\omega)$. Let $A_i=C\setminus
A_i^{\bullet}$ for all $i\in \mathbb{N}_0$.   If
$A^\bullet_i\rightarrow A^\bullet_0$, then
$\overline{S}_{n-1}(A_i,\cdot)\rightarrow
\overline{S}_{n-1}(A_0,\cdot)$ weakly on $\omega$. That is, for any
continuous function $f: \omega\rightarrow \mathbb{R}$, one has,
\begin{equation}\label{weak-1}
\int_\omega f(u) \,d\overline{S}_{n-1}(A_i, u)\rightarrow \int_\omega
f(u) \,d\overline{S}_{n-1}(A_0,u).
\end{equation}
Moreover, if continuous functions $f_i: \omega\rightarrow
\mathbb{R}\ (i\in \mathbb{N})$ satisfy that $f_i\rightarrow f$
uniformly on $\omega$, then
\begin{equation}\label{weak-2}
\int_\omega f_i(u) \,d \overline{S}_{n-1}(A_i, u)\rightarrow \int_\omega
f(u) \,d \overline{S}_{n-1}(A_0,u).
\end{equation}
\end{lemma}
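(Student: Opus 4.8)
The plan is to reduce \eqref{weak-1} to the classical weak continuity of surface area measures for convex bodies and then transfer the conclusion back to the unbounded sets $A^\bullet_i$. The bridge is the truncation $B_i := A^\bullet_i\cap C_t$ for a suitably large fixed $t>0$: each $B_i$ is a genuine convex body, Definition \ref{definition-convergence} gives $B_i\to B_0$ in the Hausdorff metric once $t$ exceeds the threshold there, and the classical theorem (see \cite{Schneider2014}) yields $S_{n-1}(B_i,\cdot)\to S_{n-1}(B_0,\cdot)$ weakly on $\sphere$. Two facts must then be supplied: (i) a fixed slab $C_{t_0}$ containing, for every $i$, all boundary points of $A^\bullet_i$ whose outer normals lie in $\omega$; and (ii) the identity $S_{n-1}(B_i,\cdot)|_{\Omega_C}=\overline{S}_{n-1}(A_i,\cdot)$ whenever $t>t_0$.

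For (i) I would first use that $\omega$ is a compact subset of $\sphere\cap\mathrm{int}\,C^\circ$ to produce $\delta>0$ with $u\cdot v\le-\delta$ for all $u\in\omega$ and all $v\in C\cap\sphere$; this rests on the strict inequality $u\cdot v<0$ valid for $u\in\mathrm{int}\,C^\circ$ and $v\in C\setminus\{o\}$, together with compactness of $\omega$ and of $C\cap\sphere$. If $x\in\partial A^\bullet_i$ is a supporting point with outer normal $u\in\omega$, then writing $x=|x|v$ gives $-\overline{h}(A_i,u)=x\cdot u\le-\delta|x|$, hence $x\cdot\zeta\le|x|\le\overline{h}(A_i,u)/\delta$. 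It remains to bound $\overline{h}(A_i,u)$ uniformly in $i$ and in $u\in\omega$: choosing, as provided by Definition \ref{definition-convergence}, a point $x_i\in A^\bullet_i\cap C_{t_*}$ and setting $\gamma:=\min_{v\in C\cap\sphere}v\cdot\zeta>0$ gives $|x_i|\le t_*/\gamma=:M_0$, whence $\overline{h}(A_i,u)=-h(A^\bullet_i,u)\le-(x_i\cdot u)\le|x_i|\le M_0$. Thus with $t_0:=M_0/\delta$ every such boundary point lies in $C_{t_0}$.

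For (ii) and the conclusion, fix $t>\max\{t_*,t_0\}$. I would argue that the only contributions to $S_{n-1}(B_i,\cdot)$ lying in $\Omega_C=\sphere\cap\mathrm{int}\,C^\circ$ come from the part of $\partial A^\bullet_i$ in $\mathrm{int}\,C$ below $H_t$: the lateral boundary on $\partial C$ carries normals in $\partial C^\circ$, which is disjoint from $\mathrm{int}\,C^\circ$; the cut face in $H_t$ carries the normal $\zeta\notin C^\circ$; and every edge created by truncation has dimension at most $n-2$ and is therefore $\mathcal{H}^{n-1}$-null. Since $t>t_0$ the $\omega$-facets are preserved in $B_i$, so $B_i$ agrees with $A^\bullet_i$ in a neighborhood of them; combined with $\overline{S}_{n-1}(A_i,\Omega_C\setminus\omega)=0$ (membership in $\mathcal{K}(C,\omega)$, cf. Lemma \ref{zero surface area}), this yields $S_{n-1}(B_i,\cdot)|_{\Omega_C}=\overline{S}_{n-1}(A_i,\cdot)$. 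Finally, given continuous $f:\omega\to\R$, I extend it by the Tietze theorem and an Urysohn cutoff to $\tilde f\in C(\sphere)$ with $\tilde f=f$ on $\omega$ and $\mathrm{supp}\,\tilde f\subset N$ for some compact $N$ with $\omega\subset N\subset\Omega_C$. Because $\tilde f$ vanishes off $\Omega_C$ and $\overline{S}_{n-1}(A_i,\cdot)$ is concentrated on $\omega$, one gets $\int_{\sphere}\tilde f\,dS_{n-1}(B_i,\cdot)=\int_\omega f\,d\overline{S}_{n-1}(A_i,\cdot)$, and the classical weak convergence of $S_{n-1}(B_i,\cdot)$ then delivers \eqref{weak-1}. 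The \textbf{main obstacle} is precisely step (ii): controlling the spurious surface area created by truncation and showing it cannot contaminate the portion of the measure living over $\Omega_C$; the slab bound (i) and the cutoff extension are soft once the right $\delta$, $t_0$, and $N$ are in place.

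For the second assertion \eqref{weak-2} I would estimate $\big|\int_\omega f_i\,d\overline{S}_{n-1}(A_i,\cdot)-\int_\omega f\,d\overline{S}_{n-1}(A_0,\cdot)\big|\le\|f_i-f\|_\infty\,\overline{S}_{n-1}(A_i,\omega)+\big|\int_\omega f\,d\overline{S}_{n-1}(A_i,\cdot)-\int_\omega f\,d\overline{S}_{n-1}(A_0,\cdot)\big|$. Applying the already established \eqref{weak-1} to the constant function $1$ shows $\overline{S}_{n-1}(A_i,\omega)\to\overline{S}_{n-1}(A_0,\omega)$, so these masses are bounded; hence the first term tends to $0$ since $\|f_i-f\|_\infty\to0$, while the second tends to $0$ by \eqref{weak-1}, which proves \eqref{weak-2}.
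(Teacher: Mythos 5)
Your proof is correct and follows essentially the same route as the paper's: truncate to the convex bodies $A^\bullet_i\cap C_t$, invoke the classical weak continuity of surface area measures, transfer back via a Tietze extension that vanishes off $\Omega_C$ together with the fact that $\overline{S}_{n-1}(A_i,\cdot)$ is concentrated on $\omega$, and deduce \eqref{weak-2} from \eqref{weak-1} plus the convergence of the total masses $\overline{S}_{n-1}(A_i,\omega)$. The only difference is that you spell out the uniform slab bound and the null-set accounting for the spurious boundary created by truncation, details the paper delegates to \cite{Schneider2018}.
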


\begin{proof}
Suppose that  $A^\bullet_i\rightarrow A^\bullet_0$. It follows from
Definition \ref{definition-convergence} that for sufficiently large
$t>0$, one always has  $A^\bullet_i\cap C_t\rightarrow
A^\bullet_0\cap C_t$ in terms of the Hausdorff metric. Note that for
sufficiently large $t>0$, $A^\bullet_i\cap C_t$ for all $i\in
\mathbb{N}_0$ is a convex body. Hence, for sufficiently large $t>0$,
$$
S_{n-1}(A^\bullet_i\cap C_t,\cdot)\rightarrow
S_{n-1}(A^\bullet_0\cap C_t,\cdot) \ \ \text{weakly on}\ \ S^{n-1}.
$$ That is, for any continuous function $F: S^{n-1}\rightarrow
\mathbb{R}$ and  for sufficiently large $t>0$,
\begin{equation}\label{eq-1}
\int_{S^{n-1}} F(u) \,dS_{n-1}(A^\bullet_i\cap C_t,u)\rightarrow
\int_{S^{n-1}} F(u) \,dS_{n-1}(A^\bullet_0\cap C_t,u).
\end{equation}
Tietze's extension theorem implies that, for any continuous function $f:
\omega\rightarrow \mathbb{R}$, there is a continuous function $F:
S^{n-1}\rightarrow \mathbb{R}$ such that
$$F(u)=
\begin{cases}
f(u) & \text{on }  \omega,\\
0 & \text{on }   S^{n-1}\setminus\Omega_C.
\end{cases}
$$
Note that
$S^{n-1}=\omega\cup(\Omega_C\setminus\omega)\cup(S^{n-1}\setminus\Omega_C)$
and $S_{n-1}(A^\bullet_i, \Omega_C\setminus\omega)=0$ (see
\cite[(29)]{Schneider2018}). Consequently, for all $i\in
\mathbb{N}_0$,
\begin{eqnarray*}
\int_{S^{n-1}} F(u) \,dS_{n-1}(A^\bullet_i\cap C_t,u)
=\int_{\omega} f(u) \,dS_{n-1}(A^\bullet_i\cap C_t, u)
=\int_{\omega}f(u) \,dS_{n-1}(A^\bullet_i,u),
\end{eqnarray*}
where the equalities follow from $F=0$ on $S^{n-1}\setminus \Omega_C$ and the facts that, for sufficiently large $t>0$,
$S_{n-1}(A^\bullet_i\cap C_t,\cdot)=S_{n-1}(A^\bullet_i,\cdot)$ on
$\omega$ and $S_{n-1}(A_{i}^{\bullet}\cap C_{t},\Omega_C\setminus\omega)=0$. This, together with
(\ref{eq-1}) and the fact that
$\overline{S}_{n-1}(A_i,\cdot)=S_{n-1}(A^\bullet_i,\cdot)$, yields
that $$ \lim_{i\rightarrow\infty}\int_\omega f(u)
\,d\overline{S}_{n-1}(A_i, u)= \int_\omega f(u)\,d
\overline{S}_{n-1}(A_0,u).$$ This concludes the proof for the weak
convergence of  $\overline{S}_{n-1}(A_i,\cdot)\rightarrow
\overline{S}_{n-1}(A_0,\cdot)$ on $\omega$. In particular,
\begin{equation}\label{conv-whole-measure} \lim_{i\rightarrow
\infty}
\overline{S}_{n-1}(A_i,\omega)=\overline{S}_{n-1}(A_0,\omega).\end{equation}

Now let  $f_i\rightarrow f$ uniformly on  the compact set
$\omega\subset\Omega_C$ with $f$ and each $f_i$ being continuous for
$i\in \mathbb{N}$. For any $\varepsilon>0$,  there exists an $i_0\in
\mathbb{N}$, such that, for all $i>i_0$, one has $
|f_i(u)-f(u)|<\varepsilon$  for any $u\in\omega.$  Together with
(\ref{weak-1}), \eqref{conv-whole-measure},  and the fact that
$\overline{S}_{n-1}(A_0, \omega)=S_{n-1}(A_0^\bullet, \omega)$ is
finite as $A_0^{\bullet}\in \mathcal{K}(C, \omega)$, one gets,
\begin{align*}0& \leq  \lim_{i\rightarrow \infty}
\left|\int_\omega f_i(u) \,d\overline{S}_{n-1}(A_i,u)- \int_\omega
f(u) \,d\overline{S}_{n-1}(A_0,u)\right|\\
&\le  \lim_{i\rightarrow \infty} \left|\int_\omega f(u) \,d\overline{S}_{n-1}(A_i,u)-
\int_\omega
f(u) \,d\overline{S}_{n-1}(A_0,u)\right|+ \lim_{i\rightarrow \infty} \int_\omega \left| f_i(u) -
f(u)\right| \,d\overline{S}_{n-1}(A_i,u)\\
&\leq  \varepsilon
\overline{S}_{n-1}(A_0,\omega).
\end{align*}
The desired convergence (\ref{weak-2}) holds after taking $\varepsilon\rightarrow 0^+$.
\end{proof}

Let $A^{\bullet}\in \mathcal{K}(C, \omega)$ be $C$-determined by
$\omega$ and $f: \omega\rightarrow \mathbb{R}$ be  a  continuous
function on the compact set $\omega\subset \Omega_C$. Let
$A=C\setminus A^{\bullet}$. Define  $f_{\tau}: \omega\rightarrow
\mathbb{R}$ for $\tau\in (-\tau_0, \tau_0)$, where $\tau_0>0$ is a
constant small enough, by
\begin{equation}\label{def-f-tau}
f_{\tau}(u)= \Big(\overline{h}(A, u)^{p}+\tau f(u) \Big)^{1/p}\ \ \
\ \mathrm{for }\ u\in \omega.
\end{equation}
Note that both
$\overline{h}(A, \cdot)$ and $f$  are continuous functions on
the compact set $\omega$. Hence one can let $\tau_0$ be a constant
such that
$$
0<\tau_0<\frac{\min_{u\in \omega}\overline{h}(A, u)^p}{\max_{u\in
\omega}|f(u)|}.
$$
Clearly, for each $\tau\in (-\tau_0, \tau_0)$,   $f_{\tau}$
is also a  positive and continuous function on $\omega$.   It is
easy to verify that $f_{\tau} \rightarrow \overline{h}(A,
\cdot)$ uniformly on $\omega$ as $\tau\rightarrow 0$. Hence the
Wulff shape $A^{\bullet}_\tau$ associated with $(C, \omega, f_{\tau})$
converges to $A^{\bullet}$, by  \cite[Lemma 5]{Schneider2018}.
Moreover, it can be verified that
\begin{equation}\label{conv-unif-2-2-2}
\lim_{\tau\rightarrow 0}\frac{f_\tau(u)-\overline{h}(A,u)}{\tau}
=\dfrac{1}{p} f\left(u\right) \overline{h}\left(A,u\right)^{1-p} \ \
\mathrm{uniformly\ on }\ \ \omega.
\end{equation}

We are now ready to state and prove the variational formula
regarding $V_n(f_{\tau})=V_n(C\setminus A^{\bullet}_\tau)$. Although Theorem
\ref{varitional-theorem} can be obtained from \cite[Lemma
7]{Schneider2018} by letting $G(\tau, \cdot)$ in \cite[Lemma
7]{Schneider2018}  to be $f_{\tau}$, we again provide a detailed
proof in this paper for completeness.   Note that $f_0=
\overline{h}(A, \cdot)$ and hence $V_n(f_0)=V_n(A)$.

\bt \label{varitional-theorem} Let $A^{\bullet}\in \mathcal{K}(C,
\omega)$ be $C$-determined  by $\omega$ and $f: \omega\rightarrow
\mathbb{R}$ be  a  continuous function on  $\omega$. Let
$A=C\setminus A^{\bullet}$ and $f_{\tau}$ be defined by
\eqref{def-f-tau}. For all $0\neq p\in \R$, one has
\begin{equation}\label{1.6}
\frac{\,d V_n(f_{\tau})}{\,d\tau}\bigg|_{\tau=0}
=\lim_{\tau\rightarrow 0}\dfrac{V_{n}(f_{\tau})-V_{n}(f_0)}{\tau}
=\dfrac{1}{p}\int_{\omega}f(u) \ \overline{h}\left(A,u\right)^{1-p}\,d\overline
{S}_{n-1}\left(A,u\right).
\end{equation}
\et

\begin{proof}
Note that the Wulff shape $A^{\bullet}_\tau$ associated with $(C, \omega,
f_{\tau})$ is $C$-determined by $\omega$, and hence
$S_{n-1}(A^{\bullet}_\tau, \Omega_C\setminus\omega)=0$ (see \cite[p.
220]{Schneider2018}).  For convenience, let
$A_\tau=C\setminus A^{\bullet}_\tau.$  Formula \eqref{mixed
volume}, Lemma \ref{zero surface area} and the fact that
$\overline{S}_{n-1}(A_\tau,\cdot)=S_{n-1}(A^{\bullet}_\tau,\cdot)$
imply that   \begin{eqnarray} nV_{n}(f_{\tau})& = &
nV_{n}(A_\tau)
=\int_{\omega}f_\tau(u)\,d\overline{S}_{n-1}(A_\tau,u), \label{volume-1-1-1} \\
n\overline{V}_{1}(A_\tau, A)
& =& \int_{\omega}\overline{h}(A,u)\,d\overline{S}_{n-1}(A_\tau, u). \nonumber
\end{eqnarray}
Recall that $A^{\bullet}_\tau\rightarrow A^{\bullet}$, and hence
$V_n(A_\tau)\rightarrow V_n(A)$ by
\eqref{volume-1-1-1}. The uniform convergence of
\eqref{conv-unif-2-2-2}, together with Lemma  \ref{weak
convergence}, yields that
\begin{eqnarray}
\lim_{\tau\rightarrow
0^+}\frac{V_{n}(A_\tau)-\overline{V}_{1}(A_\tau, A)}{\tau}
&=& \frac{1}{n}\lim_{\tau\rightarrow
0^+}\int_{\omega}\dfrac{f_\tau(u)-\overline{h}(A,u)}
{\tau}\,d\overline{S}_{n-1}(A_\tau,u) \nonumber \\
&=& \frac{1}{np}\int_{\omega}f(u)\ \overline{h}
\left(A,u\right)^{1-p}\,d\overline{S}_{n-1}(A, u).\label{lim-inf-11}
\end{eqnarray}
Similarly, Lemma  \ref{zero surface area} (in particular,
\eqref{compare functions}) yields that
$\overline{h}(A_\tau, u)=-h(A^{\bullet}_\tau, u)\geq
f_{\tau}(u)$  for all $u\in \omega$, and hence
\begin{align*}
\liminf_{\tau\rightarrow
0^+}\frac{\overline{V}_{1}(A, A_\tau)-V_{n}(A)}{\tau}
&=\frac{1}{n}\liminf_{\tau\rightarrow
0^+}\int_{\omega}\dfrac{\overline{h}(A_\tau, u)-\overline{h}(A,u)}
{\tau}\,d\overline{S}_{n-1}(A,u)\\&\geq \frac{1}{n}\liminf_{\tau\rightarrow
0^+}\int_{\omega}\dfrac{f_\tau(u)-\overline{h}(A,u)}
{\tau}\,d\overline{S}_{n-1}(A,u)\\&=\frac{1}{np}\int_{\omega}f(u) \ \overline{h}
\left(A,u\right)^{1-p}\,d\overline{S}_{n-1}(A, u).
\end{align*} Together with the Minkowski inequality for
$C$-coconvex sets (\ref{Minkowski ineq}), one has, \begin{eqnarray}
 \frac{1}{np}\int_{\omega}f(u) \ \overline{h}
\left(A,u\right)^{1-p}\,d\overline{S}_{n-1}(A, u)
&\leq & \liminf_{\tau\rightarrow
0^{+}} \dfrac{\overline{V}_{1}(A, A_\tau)-V_{n}(A)}{\tau} \nonumber\\
 &\leq&  V_{n}(A)^{\frac{n-1}{n}}\liminf_{\tau\rightarrow
0^{+}}\dfrac{V_{n}(A_\tau)^{\frac{1}{n}}-V_{n}(A)^{\frac{1}{n}}}{\tau}. \label{lim-1}
\end{eqnarray}
Similarly, one can also get, by \eqref{lim-inf-11},
\begin{eqnarray}\label{lim-2}\nonumber
 \frac{1}{np}\int_{\omega}f(u)\ \overline{h}
\left(A,u\right)^{1-p}\,d\overline{S}_{n-1}(A, u)
&=& \limsup_{\tau\rightarrow
0^{+}}\dfrac{V_{n}(A_\tau)-\overline{V}_{1}(A_\tau, A)}{\tau}\\
\nonumber &\geq& \limsup_{\tau\rightarrow
0^{+}}\dfrac{V_{n}(A_\tau)-V_{n}(A_\tau)^{\frac{n-1}{n}}V_{n}(A)^{\frac{1}{n}}}{\tau}\\
&=&V_{n}(A)^{\frac{n-1}{n}}\limsup_{\tau\rightarrow
0^{+}} \dfrac{V_{n}(A_\tau)^{\frac{1}{n}}-V_{n}(A)^{\frac{1}{n}}}{\tau}.
\end{eqnarray}
By (\ref{lim-1}), (\ref{lim-2}) and the fact that
$\liminf\le\limsup$, we have
\begin{eqnarray*}
\frac{\,d \big(V_n(A_\tau)^{\frac{1}{n}}\big)}{\,d\tau}\bigg|_{\tau=0^+}
&=&\lim_{\tau\rightarrow
0^{+}}\frac{V_{n}(A_\tau)^{\frac{1}{n}}-V_{n}(A)^{\frac{1}{n}}}{\tau}\\
&=& \frac{V_{n}(A)^{\frac{1-n}{n}}}{np}\int_{\omega}f(u) \ \overline{h}
\left(A,u\right)^{1-p}\,d\overline{S}_{n-1}(A, u).
\end{eqnarray*}
Thus, by the L'Hospital rule and the fact that
$V_{n}(f_{\tau})=V_{n}(A_\tau)$, one gets
\begin{eqnarray*} \frac{\,d V_n(f_{\tau})}{\,d\tau}\bigg|_{\tau=0^+}
= \frac{\,d
\big(V_n(A_\tau)^{\frac{1}{n}}\big)}{\,d\tau}\bigg|_{\tau=0^+}
\cdot nV_{n}(A)^{\frac{n-1}{n}}= \frac{1}{p}\int_{\omega}f(u) \
\overline{h} \left(A,u\right)^{1-p}\,d\overline{S}_{n-1}(A, u).
\end{eqnarray*} Following the same lines, one can also get \begin{eqnarray*}
\frac{\,d V_n(f_{\tau})}{\,d\tau}\bigg|_{\tau=0^-} =
\frac{1}{p}\int_{\omega}f(u) \ \overline{h}
\left(A,u\right)^{1-p}\,d\overline{S}_{n-1}(A, u)
\end{eqnarray*} and hence the desired formula (\ref{1.6}) follows.
\end{proof}
Motivated by \eqref{1.6},  one can define the $L_p$ surface area
measure of $A^{\bullet}\in \mathcal{K}(C, \omega)$ (or equivalently
of $A=C\setminus A^{\bullet}$) and the $L_p$ mixed volume of $A$ and
a continuous function $g: \omega\rightarrow \R$ as follows.
\begin{definition} \label{Lp-measure-1}
Let $A^{\bullet}\in \mathcal{K}(C, \omega)$ be $C$-determined by
$\omega$ and $A=C\setminus A^{\bullet}$. For $0\neq p\in \R$, the
$L_p$ surface area measure of $A^{\bullet}$, denoted by $S_{n-1,
p}(A^{\bullet}, \cdot)$, on $\omega$ is absolutely continuous with
respect to $S_{n-1}(A^{\bullet}, \cdot)$ such that
$$
\frac{\,dS_{n-1, p}(A^{\bullet}, u)}{\,dS_{n-1}(A^{\bullet},
u)}=\big(\!-h(A^{\bullet}, u)\big)^{1-p}, \ \ \text{for} \ \
u\in\omega.
$$
Equivalently, the
$L_p$ surface area measure of $A$, denoted by $\overline{S}_{n-1,
p}(A, \cdot)$, is defined by $\overline{S}_{n-1, p}(A,
\cdot)=S_{n-1, p}(A^{\bullet}, \cdot)$, and hence
$$
\frac{\,d\overline{S}_{n-1, p}(A, u)}{\,d\overline{S}_{n-1}(A,
u)}=\overline{h}(A, u)^{1-p}, \ \ \text{for} \ \ u\in\omega.
$$

 Let $g: \omega\rightarrow \mathbb{R}$ be  a positive and continuous
function on  $\omega$. The $L_p$ mixed volume of $A$ and $g$, denoted by
$\overline{V}_p(A, g)$, is defined by
\begin{equation}\label{def-lp-mixed-volume-1} \overline{V}_p(A,
g)=\frac{1}{n}\int_{\omega}g(u)^{p}\ \overline{h}
\left(A,u\right)^{1-p}\,d\overline{S}_{n-1}(A,
u)=\frac{1}{n}\int_{\omega}g(u)^{p} \,d\overline{S}_{n-1, p}(A,
u).\end{equation}
 \end{definition}

A  fundamental question related to the $L_p$ surface area measure
is the following $L_p$ Minkowski problem. A solution to this $L_p$
Minkowski problem will be provided in Section \ref{section:
lp-minkowski-problem}.
\begin{problem}[The $L_p$ Minkowski problem] \label{lp-min-problem}
Let  $0 \neq p\in \R$ and $\omega\subset\Omega_C$ be a compact set.
Under what necessary and/or  sufficient conditions on a finite Borel
measure $\mu$ on $\omega$ does there exist a $C$-close set
$A^\bullet$ with $A=C\setminus A^\bullet$ such that $\mu= \overline
{S}_{n-1, p}(A,\cdot)$?
\end{problem}

\section{The $L_p$ Minkowski inequality and the unique
determination of $C$-coconvex sets for
 $0<p<1$}\label{M-inequality-5}

 In this section, the  $L_p$ Minkowski
inequality related to the $L_p$ mixed volume for $C$-coconvex sets for $0<p<1$ is
established. Such $L_p$ Minkowski inequality and the  $L_p$ Brunn-Minkowski inequality
(\ref{1.5}) can be viewed as the fundamental elements in the $L_p$
Brunn-Minkowski theory for $C$-coconvex sets.  Based on  the  $L_p$ Minkowski
inequality, the unique determination for $C$-coconvex sets by the $L_p$ surface area measure for $0<p<1$
is provided.

Indeed, based on \eqref{1.6} and \eqref{def-lp-mixed-volume-1}, if
$A=C\setminus A^{\bullet}$ with $A^{\bullet}\in \mathcal{K}(C,
\omega)$ and  $f: \omega\rightarrow \mathbb{R}$ is  a positive and
continuous function on a compact set $\omega$,  one has, for all
$0\neq p\in \R$,
\begin{equation}\label{mixed-lp-1-2-3} \frac{p}{n} \cdot
\lim_{\tau\rightarrow
0}\dfrac{V_{n}(f_{\tau})-V_{n}(f_0)}{\tau}=\dfrac{1}{n}\int_{\omega}f(u)\
\overline{h}\left(A,u\right)^{1-p}\,d\overline
{S}_{n-1}\left(A,u\right)=\overline{V}_p(A, f^{1/p}),
\end{equation}
where $f_{\tau}$ is defined by \eqref{def-f-tau}.
 \bd \label{p-mixed volume--1} Let  $A=C\setminus A^{\bullet}$ with
$A^{\bullet}\in \mathcal{K}(C, \omega)$ and   $A_1=C\setminus
A_1^{\bullet}$ with $A_1^{\bullet}\in \mathcal{K}(C, \omega)$.
Define the $L_p$ mixed volume of $A$ and $A_1$, denoted by
$\overline{V}_p(A, A_1)$, for $0\neq p\in \R$ as
\begin{equation} \label{mixed-lp-1-2-5}
\overline{V}_p(A, A_1)=\dfrac{1}{n}\int_{\omega}\overline{h}(A_1, u)^{p}\
\overline{h}\left(A,u\right)^{1-p}\,d\overline{S}_{n-1}\left(A,u\right)
=\dfrac{1}{n}\int_{\omega}\overline{h}(A_1, u)^{p} \,d\overline{S}_{n-1, p}\left(A,u\right).
\end{equation}
 \ed

From Definition \ref{p-mixed volume--1}, one can check that $\overline{V}_p(A, A)=V_n(A)$ and
$\overline{V}_{p}(\alpha A,\beta
A_1)=\alpha^{n-p}\beta^{p}\overline{V}_{p}(A,A_1)$ for any
$\alpha, \beta>0$ and two $C$-coconvex sets $A,A_1$ such that
$A^\bullet,A_1^\bullet\in\mathcal{K}(C,\omega)$.  We now prove the
following  $L_p$ Minkowski inequality.
 \bt \label{9} Let $A, A_1$ be
two $C$-coconvex sets such that $A^{\bullet}\in \mathcal{K}(C,
\omega)$ and $A_1^{\bullet}\in \mathcal{K}(C, \omega)$.  For
$0<p<1$, one has \begin{equation}\label{2.6}
\overline{V}_{p}(A,A_{1})^{n}\leq V_{n}(A)^{n-p}V_{n}(A_{1})^{p},
\end{equation}
with equality if and only if $A=\alpha A_{1}$ for some
$\alpha>0$.
\et

\begin{proof}
By \eqref{mixed-lp-1-2-5},  if $p=1$, one gets  (see also \eqref{mixed volume}),
\begin{equation*}
\overline{V}_1(A, A_1)=\frac{1}{n}\int_{\omega}
\overline{h}(A_1,u)\,d\overline{S}_{n-1}(A, u).
\end{equation*}
Moreover, $\overline{V}_1(A, A)=V_n(A)$ (see also \eqref{integral
formula for volume} and the fact that $S_{n-1}(A^\bullet, \Omega_C\setminus\omega)=0$
\cite[(29)]{Schneider2018}), and hence, a probability measure on $\omega$
can be defined as follows:
$$\,d\nu=\dfrac{\overline{h}(A,\cdot)}{nV_{n}(A)}\,d\overline{S}_{n-1}(A,\cdot).$$
It follows from the H\"{o}lder's inequality that, for $0<p<1$, one has
\begin{eqnarray}
\frac{\overline{V}_p(A, A_1)}{V_n(A)}&=&\dfrac{1}{nV_n(A)}\int_{\omega}\overline{h}(A_1, u)^{p}\
\overline{h}\left(A,u\right)^{1-p}\,d\overline{S}_{n-1}\left(A,u\right) \nonumber \\
&=&  \int_{\omega}\bigg(\frac{\overline{h}(A_1,u)}{\overline{h}(A,u)}\bigg)^{p}\,d\nu(u)\nonumber \\
\nonumber\\
&\leq &  \bigg(\int_{\omega}\frac{\overline{h}(A_1,u)}{\overline{h}(A,u)}\,d\nu(u)\bigg)^{p} \nonumber \\
&=&  \bigg(\int_{\omega}\frac{\overline{h}(A_1,u)}{nV_{n}(A)}\,d\overline{S}_{n-1}(A, u)\bigg)^{p}
\nonumber \\
&=& \bigg(\dfrac{\overline{V}_{1}(A,A_{1})}{V_{n}(A)}\bigg)^{p}.\label{2.1}
\end{eqnarray}
Employing the  Minkowski inequality \eqref{Minkowski ineq} to
\eqref{2.1}, one gets
\begin{eqnarray*}\overline{V}_p(A, A_1)^n \leq  V_n(A)^n\bigg(\dfrac{\overline{V}_{1}(A, A_{1})}
{V_{n}(A)}\bigg)^{np}\leq V_n(A)^n\bigg(\dfrac{V_n(A_{1})}{V_{n}(A)}\bigg)^{p}=
V_{n}(A)^{n-p}V_{n}(A_{1})^{p}.
\end{eqnarray*}
This is the desired inequality \eqref{2.6}. The characterization of
equality for inequality \eqref{2.6} is an easy consequence of the
characterization of equality for inequality \eqref{Minkowski ineq}
and for the H\"{o}lder's inequality (applied in \eqref{2.1}).
\end{proof}

\begin{remark}
Although inequality \eqref{2.6} was proved for special $C$-coconvex
sets, its proof indeed can be used to show that, for $0<p<1$ and  for any two
$C$-coconvex sets $A$ and $A_1$,
 \begin{equation*} \label{mixed-lp-1-2-5--7}
\overline{V}_p(A, A_1)=\dfrac{1}{n}\int_{\Omega_C}\overline{h}(A_1,
u)^{p}\
\overline{h}\left(A,u\right)^{1-p}\,d\overline{S}_{n-1}\left(A,u\right)\leq
V_{n}(A)^{\frac{n-p}{n}}V_{n}(A_{1})^{\frac{p}{n}},
\end{equation*}
with equality if and only if $A=\alpha A_{1}$ for some $\alpha>0$.
 \end{remark}

\bt \label{th5-2-6} Let $A_1,A_2$ be two $C$-coconvex sets such that
$A^\bullet_1, A_2^\bullet\in\mathcal{K}(C,\omega)$. For $0<p<1$, let $\overline{h}=(\overline{h}(A_1, \cdot)^p+\overline{h}(A_2, \cdot)^p)^{1/p}$ on $\omega$ and  $\bar{A}^{\bullet} \in \cK(C, \omega)$ be the Wulff shape associated with $(C, \omega,  \overline{h})$. Then
\begin{equation} V_{n}(\bar{A})^{\frac{p}{n}}\leq
V_{n}(A_{1})^{\frac{p}{n}}+V_{n}(A_{2})^{\frac{p}{n}}, \label{w-ineq-1}
\end{equation} where $\bar{A}=C\setminus \bar{A}^{\bullet}$,  is equivalent to the  $L_p$ Minkowski inequality \eqref{2.6}.
\et

\begin{proof} Let $\tau>0$ and $\bar{A}_{\tau}^{\bullet}\in \cK(C, \omega)$ be the Wulff shape associated with
$(C, \omega, \overline{h}_{\tau})$, where $\overline{h}_{\tau}=(\overline{h} (A_1, \cdot)^p+\tau \overline{h} (A_2, \cdot)^p)^{1/p}$ on $\omega$. Let $\bar{A}_{\tau}=C\setminus \bar{A}_{\tau}^{\bullet}$. Inequality  \eqref{w-ineq-1} for $0<p<1$
implies that, for all $\tau>0$,
\begin{equation*}
g(\tau)=V_n(\bar{A}_{\tau})^{\frac{p}{n}} -V_{n}(A_1)^{\frac{p}{n}}-\tau
V_{n}(A_{2})^{\frac{p}{n}} \leq  0.
\end{equation*} Taking use of Theorem
\ref{varitional-theorem} (or see \eqref{mixed-lp-1-2-3}), one gets
\begin{align*}
\lim_{\tau\rightarrow
0^{+}}\dfrac{g(\tau)-g(0)}{\tau}&=\lim_{\tau\rightarrow
0^{+}}\dfrac{V_{n}(\bar{A}_{\tau})^{\frac{p}{n}}-V_{n}(A_{1})^{\frac{p}{n}}-\tau
V_{n}(A_{2})^{\frac{p}{n}}}{\tau}\\
&=\dfrac{p}{n}V_{n}(A_{1})^{\frac{p}{n}-1}\lim_{\tau\rightarrow
0^{+}}\dfrac{V_{n}(\bar{A}_{\tau})-V_{n}(A_{1})}{\tau}-V_{n}(A_{2})^{\frac{p}{n}}\\
&=V_{n}(A_{1})^{\frac{p-n}{n}}\overline{V}_{p}\left(A_{1},A_{2}\right)
-V_{n}(A_{2})^{\frac{p}{n}}.
\end{align*} The desired  $L_p$ Minkowski inequality follows easily from $g(0)=0$ and  $g(\tau)\leq 0$ for $\tau>0$.

Conversely, we assume that the  $L_p$ Minkowski inequality holds. Note that  $\bar{A}^\bullet\in \mathcal{K}(C, \omega)$ and $\overline{h}=(\overline{h} (A_1, \cdot) ^p+ \overline{h} (A_2, \cdot)^p)^{1/p}$ on $\omega$.  For $A^\bullet\in \mathcal{K}(C,\omega)$,  by  \eqref{mixed-lp-1-2-5}, one
has,
\begin{align*}
\overline{V}_{p}\left(A, \bar{A}\right)&=\dfrac{1}{n}
\int_{\omega}\overline{h}\left( \bar{A}, u\right)^{p} \,d \overline{S}_{n-1, p}(A,u)\\
&=\dfrac{1}{n}\int_{\omega}\big(\overline{h}\left(A_1,u\right)^{p}
+\overline{h}\left(A_2,u\right)^{p}\big)\,d \overline{S}_{n-1, p}(A,u)\\
&=\dfrac{1}{n}\int_{\omega}\overline{h}\left(A_1,u\right)^{p}
\,d \overline{S}_{n-1, p}(A,u)+\dfrac{1}{n}\int_{\omega}\overline{h}
\left(A_2,u\right)^{p} \,d \overline{S}_{n-1, p}(A,u)\\
&=\overline{V}_{p}\left(A,A_1\right)+\overline{V}_{p}\left(A,A_2\right).
\end{align*}
Employing the  $L_p$ Minkowski inequality, one gets
\begin{eqnarray*}
V_{n}\left(\bar{A} \right) =
\overline{V}_{p}\left(\bar{A},  \bar{A}\right)= \overline{V}_{p}\left(\bar{A}, A_1\right)+\overline{V}_{p}
\left(\bar{A}, A_2\right) \leq V_{n}(\bar{A})^{\frac{n-p}{n}}\big(V_{n}(A_1)^{\frac{p}{n}}+V_{n}(A_2)^{\frac{p}{n}}\big).
\end{eqnarray*} Simplifying this, one can get the desired inequality \eqref{w-ineq-1}.
\end{proof}

\begin{remark} Let $A_1,A_2$ be two $C$-coconvex sets such that
$A^\bullet_1, A_2^\bullet\in\mathcal{K}(C,\omega)$. For $0<p<1$, one cannot expect to have $\bar{A}=A_1\oplus _p A_2.$ Indeed, it is easily checked that $\bar{A}\subseteq A_1\oplus _p A_2$ because $\bar{A}^{\bullet}=C\setminus \bar{A}$ is the $C$-coconvex set generated by less halfspaces. However, counterexamples show that $\bar{A}\neq A_1\oplus _p A_2$ can happen, and an example is provided in Figure \ref{fig-2-picture}. Moreover, as $\bar{A}\subseteq A_1\oplus _p A_2$, it follows from Theorem \ref{p-B-M-I} that
\begin{equation*} V_n(\bar{A})\leq
V_{n}(A_{1}\oplus_{p}A_{2})^{\frac{p}{n}}\leq
V_{n}(A_{1})^{\frac{p}{n}}+V_{n}(A_{2})^{\frac{p}{n}}.
\end{equation*} Clearly, if equality holds in \eqref{w-ineq-1}, then equality holds in \eqref{1.5} as well, and this requires that  $A_{1}=\alpha A_{2}$ for some $\alpha>0$.  On the other hand, if $A_{1}=\alpha A_{2}$ for some $\alpha>0$, then  $$\bar{A}=A_1\oplus _p A_2=(\alpha A_{2})\oplus_{p}A_{2}=(1+\alpha^p)^{1/p}A_2,$$  which clearly implies the equality  in \eqref{w-ineq-1}.  This implies that the equality characterizations for inequalities \eqref{2.6} and \eqref{w-ineq-1} are indeed the same.    \begin{figure}[htbp] \centering
\includegraphics[width=8.2cm]{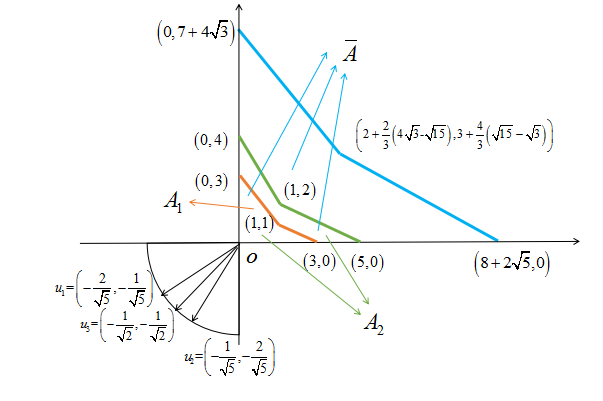}
\caption{Let $\omega=\{u_1, u_2\}$ and $p=1/2$.  The support functions of  $\bar{A}$ and $A_1\oplus _p A_2$ at $u_3$ are $\frac{15+4\sqrt{3}+2\sqrt{15}}{3\sqrt{2}}$ (about $6.9943$) and $\frac{5\sqrt{2}+4\sqrt{3}}{2}$ (about $6.9996$),  respectively.}    \label{fig-2-picture}
\end{figure}
 \end{remark}

The following result is for the unique determination of $C$-coconvex
sets by the $L_p$ surface area measure  for $0<p<1$. In particular, it can be applied to obtain the
uniqueness of solutions to the $L_p$ Minkowski problem (i.e.,
Problem \ref{lp-min-problem}), if its solution exists; see Theorem
\ref{solution-lp-minkowski-1-norm} for more details.

\bt \label{14} Let $A_1=C\setminus A_1^{\bullet}$ and
$A_2=C\setminus A_2^{\bullet}$ be two $C$-coconvex sets such that
$A^\bullet_1, A_2^\bullet\in\mathcal{K}(C,\omega)$. Then
$A_{1}=A_{2}$, if the following identity for $0<p<1$  holds on
$\omega$: \begin{equation}\label{assumption-1}
\overline{h}(A_{1},\cdot)^{1-p}\overline{S}_{n-1}(A_{1},\cdot)
=\overline{h}(A_{2},\cdot)^{1-p}\overline{S}_{n-1}(A_{2},\cdot).\end{equation}
\et
\begin{proof} By \eqref{mixed-lp-1-2-5} and \eqref{assumption-1}, one gets
\begin{eqnarray*}
\overline{V}_p(A_1, A_2)
&=&\dfrac{1}{n}\int_{\omega}\overline{h}(A_2, u)^{p}\ \overline{h}
\left(A_1,u\right)^{1-p}\,d\overline{S}_{n-1}\left(A_1,u\right)\\
&=& \dfrac{1}{n}\int_{\omega}\overline{h}(A_2, u)^{p}\
\overline{h}\left(A_2,u\right)^{1-p}\,d\overline{S}_{n-1}\left(A_2,u\right)=V_n(A_2).
\end{eqnarray*}
The $L_p$ Minkowski inequality  \eqref{2.6} yields
\begin{equation}
\dfrac{V_{n}(A_{2})}{V_{n}(A_{1})}=\dfrac{\overline{V}_{p}(A_{1},A_{2})}{V_{n}(A_{1})}\leq
 \dfrac{V_{n}(A_{1})^{\frac{n-p}{n}}V_{n}(A_{2})^{\frac{p}{n}}}{V_{n}(A_{1})}\leq
\left(\dfrac{V_{n}(A_{2})}{V_{n}(A_{1})}\right)^{\frac{p}{n}}.\label{inequality-com-1}
\end{equation}
This implies $V_{n}(A_{2})\leq V_{n}(A_{1})$. Similarly, we have
$V_{n}(A_{2})\geq V_{n}(A_{1})$.
Thus $V_{n}(A_2)=V_{n}(A_1)$ and
inequality \eqref{inequality-com-1} becomes equality. This further
leads to $A_{1}=A_{2}$.
\end{proof}

More results for the unique determination of $C$-coconvex sets can
be obtained. For example, we have the following theorem. \bt
\label{11} Let $A_1,A_2$ be two $C$-coconvex sets such that their
companion sets $A^\bullet_1,A_2^\bullet\in\mathcal{K}(C,\omega)$.
Let $V_{n}(A_1)\geq V_{n}(A_2)$ and $0<p<1$. \vskip 2mm  \noindent
(i) If $V_{n}(A_1)\leq \overline{V}_{p}(A_1,A_2)$, then $A_1=A_2$.
\vskip 2mm  \noindent (ii) If $V_{n}(A_1)\leq
\overline{V}_{p}(A_2,A_1)$, then $A_1=A_2$. \vskip 2mm  \noindent
(iii) If $\overline{V}_{p}(A,A_1)=\overline{V}_{p}(A,A_2)$ for any
$C$-coconvex set $A$ such that $A^\bullet\in\mathcal{K}(C,\omega)$,
then $A_1=A_2$. \et

\begin{proof} Only the proof of the
first case will be demonstrated and other cases can be proved along
the same lines. By the $L_p$ Minkowski inequality (\ref{2.6}), under
the assumption in Theorem \ref{11}, one has
\begin{equation*}
V_{n}(A_1)^{n}\leq \overline{V}_{p}(A_1,A_2)^{n}\leq
V_{n}(A_1)^{n-p}V_{n}(A_2)^{p},
\end{equation*}
with equality in the second inequality if and only if $A_1=\alpha
A_2$ for some $\alpha>0$. The above inequality yields that
$V_{n}(A_1)\le V_{n}(A_2)$. Thus $V_{n}(A_1)=V_{n}(A_2)$ by the
condition, and then $A_1=A_2$.
\end{proof}

\section{A solution to the $L_p$ Minkowski problem} \label{section: lp-minkowski-problem}
In this section, we will provide a solution to the $L_p$ Minkowski
problem (i.e., Problem  \ref{lp-min-problem}) for all $0\neq
p\in\mathbb{R}$. Solutions to the $L_p$ Minkowski problem for $p=1$
and $p=0$  have been provided in \cite{Schneider2018}. \bt
\label{solution-lp-minkowski-1} Let $\omega$  be a compact set of
$\Omega_C$. Suppose that $\mu$ is a nonzero finite Borel measure on
$\Omega_C$ whose support is concentrated on $\omega$. For $0\neq
p\in \R$, there exists a $C$-full set $A_0^\bullet$ ($A_0=C\setminus
A_0^\bullet$) such that
\begin{equation}\label{non-norm-p-min} \mu = c \cdot
\overline{S}_{n-1,p}(A_0, \cdot)  \ \ \ \mathrm{with}\ \ \ c=\frac{1}{nV_n(A_0)}
 \int_{\omega}\overline{h}(A_0, u)^{p}\,d\mu(u) . \end{equation}
 \et
\begin{proof}
Denote by $C^+(\omega)$  the set of all continuous and
positive functions on $\omega$. For any $\alpha>0$ and $f\in
C^+(\omega)$, one can check that the Wulff shape $A^{\bullet}_{\alpha f}$
associated with $(C,\omega, \alpha f)$ is a dilation of the Wulff
shape $A^{\bullet}_{f}$ associated with $ (C,\omega,f)$, namely $A^{\bullet}_{\alpha f}
=\alpha A^{\bullet}_f.$  Recall \eqref{vol-func-1} that $V_n(f)=V_n(C\setminus
A^{\bullet}_f)$ and hence \begin{equation}\label{hom-v-f} V_n(\alpha
f)=\alpha^n V_n(f).
\end{equation}

For $p>0$, consider the following optimization problem:
\begin{equation}  \label{opt-min-1} \sup \Big\{\mathcal{L}(f):   \ \
f\in C^+(\omega) \Big\},
\end{equation}
where $\mathcal{L} (f)$ is the functional on $C^{+}(\omega)$ defined
by \begin{equation} \mathcal{L} (f)=V_n(f)^{-\frac{p}{n}}
\int_{\omega}f(u)^{p}\,d\mu(u). \label{opt-functional-1-p}
\end{equation}  It follows from \eqref{hom-v-f} that the
optimization problem \eqref{opt-min-1}  has homogeneity of degree
$0$.

The key to solve the $L_p$ Minkowski problem is to find a $C$-full
set $A_0^{\bullet} \in \mathcal{K}(C, \omega)$, such that
$\overline{h}({A_0}, \cdot)$ with $A_0=C\setminus
A_0^\bullet$ is positive on $\omega$ and is a solution to the
optimization problem \eqref{opt-min-1}.  Indeed, if such
$A_0^{\bullet}$ exists and $\overline{h}(A_0, \cdot)$ solves
\eqref{opt-min-1}, then for any continuous function $g:
\omega\rightarrow \R$, one must have
$$\frac{\partial \mathcal{L} \big((\overline{h}(A_0, \cdot)^{p}+t
g)^{\frac{1}{p}}\big)}{\partial t} \bigg|_{t=0}=0,$$ due to the fact
that the optimization problem \eqref{opt-min-1}  has homogeneity of
degree $0$. This, together with Theorem \ref{varitional-theorem},
further yields that, for any continuous function $g:
\omega\rightarrow \R$,   \begin{equation} \frac{1}{n}
\bigg(\int_{\omega}\overline{h}(A_0, u)^{p}\,d\mu(u)\bigg) \cdot
\bigg( \int_{\omega}g(u) \overline{h}(A_0, u)^{1-p}
\,d\overline{S}_{n-1}(A_0, u) \bigg)=V_n(A_0)\cdot
\int_{\omega}g(u)\,d\mu(u).\label{Lag-meth-1}
\end{equation} As $g$ is arbitrary, one can get
\begin{equation*}
\mu = \frac{1}{nV_n(A_0)}
\bigg(\int_{\omega}\overline{h}(A_0, u)^{p}\,d\mu(u)\bigg) \cdot
\overline{h}(A_0, \cdot)^{1-p} \overline{S}_{n-1}(A_0, \cdot)= c \cdot
\overline{S}_{n-1,p}(A_0, \cdot).
\end{equation*}
Then $A_0$ satisfies \eqref{non-norm-p-min} as desired.

Now let us claim that the optimization problem \eqref{opt-min-1} has
$\overline{h}(A_0, \cdot)$ as one of its optimizers for some $A_0=C\setminus
A_0^\bullet$ such that $A_0^{\bullet}\in \mathcal{K}(C, \omega)$ is a
$C$-full set. To this end, by Lemma \ref{zero surface area}, one
gets for any $f\in C^+(\omega)$, $V_n(f)= V_n(C\setminus
A^{\bullet}_ f)=V_n\big(\overline{h}(C\setminus  A^{\bullet}_ f, \cdot)\big)$  and
$$
\overline{h}(C\setminus  A^{\bullet}_ f, u)
=-h(A^{\bullet}_ f, u)\geq f(u), \ \ \text{for} \ \ u\in\omega.
$$
It follows from \eqref{opt-functional-1-p} that
$\mathcal{L}(\overline{h}\big(C\setminus  A^{\bullet}_ f, \cdot)\big)\geq \mathcal{L}(f)$ for
$p>0$. Hence an optimizer for
\begin{equation}  \label{opt-min-geo} \sup
\Big\{ \mathcal{L}\big(\overline{h}(Q, \cdot)\big):   \ \ Q=C\setminus Q^\bullet \ \
\mathrm{such\ that} \ \ Q^\bullet \in \mathcal{K}(C, \omega) \Big\},
\end{equation}
for $p>0$ is also an optimizer of the optimization problem
\eqref{opt-min-1}.  Again the optimization problem
\eqref{opt-min-geo} has homogeneity of degree $0$, and hence it is
equivalent to the following optimization problem:  for $p>0$,
\begin{equation}  \label{opt-min-geo-p} \Theta=\sup
\Bigg\{\int_{\omega}\overline{h}(Q, u)^{p}\,d\mu(u):    \
Q=C\setminus Q^\bullet \ \mathrm{such\ that}\  V_n(Q)=1 \
\mathrm{and} \ Q^\bullet \in \mathcal{K}(C, \omega) \Bigg\}.
\end{equation}
Clearly, the supremum is taken over a nonempty set:
\begin{equation*}
\mathcal{Q}=\Big\{ Q=C\setminus Q^\bullet: \ \   V_n(Q)=1 \
\mathrm{and} \ Q^\bullet \in \mathcal{K}(C, \omega) \Big\}\neq
\emptyset.
\end{equation*}

Recall that $\zeta\in \sphere$ is a special and fixed unit vector which
will not appear in the following notations (see Section
\ref{prepar}): for $t\ge0$,  $$ H_t=\big\{x\in \mathbb{R}^n: x\cdot
\zeta=  t\big\} \ \ \mathrm{and}\ \   \ H^-_t=\big\{x\in \mathbb{R}^n:
x\cdot \zeta \le t\big\}.
$$
Lemma \ref{lemma-bounded} implies that there exists a constant
$t_0>0$, such that  $C\cap H_{t_0}\subset Q^{\bullet}$ for any $Q\in
\mathcal{Q}$. Hence,
\begin{equation*}
\int_{\omega}\overline{h}(Q, u)^{p}\,d\mu(u) \leq \int_{\omega}(-h(C\cap
H_{t_0}, u))^{p}\, d\mu (u):=c_0.
\end{equation*}
Note that $c_0$ is a universal constant and is  independent of the
choice of $Q\in \mathcal{Q}$. This shows that the optimization
problem \eqref{opt-min-geo-p} is well-defined and the supremum
$\Theta$ is finite. Moreover $\Theta>0$.

Take a sequence $\{Q_{i}\}_{i\in \mathbb{N}}\subset \mathcal{Q}$
such that
\begin{equation*} 0< \Theta= \lim_{i\rightarrow\infty}
\int_{\omega}\overline{h}(Q_i, u)^p\,d\mu(u).
\end{equation*}
Again, by Lemma \ref{lemma-bounded},  the constant $t_0>0$ as above
satisfies that  $C\cap H_{t_0}\subset Q_i^{\bullet}$ for any $i\in
\mathbb{N}$. In particular, $\emptyset\neq Q_{i}^{\bullet}\cap
H_{t_0}^{-}\subset C\cap H_{t_0}^{-}$ for all $i\in \mathbb{N}$.
Therefore,  the sequence of  convex bodies $\{Q_{i}^{\bullet}\cap
H_{t_0}^{-}\}_{i\in \mathbb{N}}$ is bounded and the Blaschke
selection theorem can be applied to get  a subsequence, say
$\{Q_{i_k}^{\bullet}\cap H_{t_0}^{-}\}_{k\in \mathbb{N}}$,
converging to some compact convex set, say $K$. Due to the fact that $C\cap H_{t_0}\subset Q_i^{\bullet}$ for any $i\in
\mathbb{N}$, there exists a closed convex set $A_0^{\bullet}\subset C$ such that $K=A_0^{\bullet}\cap H_{t_0}^{-}$ and  $Q_{i_k}^{\bullet}\rightarrow A_0^{\bullet}$.  Moreover, $V_n(A_0)=V_n(C\setminus A_0^{\bullet})=\lim_{k\rightarrow \infty} V_n(C\setminus Q_{i_k}^{\bullet})=1$. This further gives that $A^\bullet_0\in \cK(C, \omega)$ is a $C$-full set. It is also easily checked that $\overline{h}(Q_{i_k}, \cdot)$ converges
to $\overline{h}(A_0, \cdot)$ uniformly on $\omega$. We can also claim that
$\overline{h}(A_0, \cdot)$ is positive on $\omega$ by contradiction. That
is, if  $\overline{h}(A_0, u_0)=0$ for some $u_0\in \omega$, then
$$0=-\overline{h}(A_0, u_0)= h(A_0^\bullet, u_0)=\max\{x\cdot u_0:
x\in A_0^{\bullet}\}. $$ Note that $\omega\subset \Omega_C$ is a
compact set, and $x\cdot u_0<0$ for all $o\neq x\in C$. This further
yields $o\in A_0^{\bullet}$ and hence
$\overline{h}(A_0, u)=h(A_0^{\bullet}, u)=0$ for all $u\in \omega$.
In this case,
$$A_0^{\bullet}=C\cap \bigcap_{u\in\omega}H^{-}(u,
h(A_0^\bullet,u))=C\cap \bigcap_{u\in\omega}H^{-}(u,
0)=C$$ and then $A_0=C\setminus A_0^\bullet=\emptyset$. This
contradicts $V_n(A_0)=1$ and hence  $\overline{h}(A_0, \cdot)$ is
positive on $\omega$.  In conclusion,   $A_0\in \mathcal{Q}$ and
\begin{equation*}
\Theta= \lim_{k\rightarrow\infty}
\int_{\omega}\overline{h}(Q_{i_k}, u)^p\,d\mu(u)=
\int_{\omega}\overline{h}(A_0, u)^p \,d\mu(u).
\end{equation*}
That is, $A_0$ solves the optimization problem \eqref{opt-min-geo-p}
(and equivalently \eqref{opt-min-geo} and \eqref{opt-min-1}).  This
completes the proof for the case $p>0$.

Now let $p<0$. In this case, we consider the following optimization problem:
\begin{equation}  \label{opt-min-p<0}
\inf \Big\{\mathcal{L}(f):   \ \ f\in C^+(\omega) \Big\}.
\end{equation}
Again the optimization problem \eqref{opt-min-p<0} has homogeneity
of degree $0$. Moreover, by Lemma \ref{zero surface area} and
\eqref{opt-functional-1-p}, one gets
$\mathcal{L}\big(\overline{h}(C\setminus K_f, \cdot)\big)\leq \mathcal{L}(f).$
Hence an optimizer for
\begin{equation}  \label{opt-min-geo-p<0}
 \inf \Bigg\{\int_{\omega}\overline{h}(Q, u)^{p}\,d\mu(u):
 \ Q=C\setminus Q^\bullet \ \mathrm{such\ that}\  V_n(Q)=1
 \  \mathrm{and} \ Q^\bullet \in \mathcal{K}(C, \omega) \Bigg\}
\end{equation}
for $p<0$ is also an optimizer of the optimization problem
\eqref{opt-min-p<0}.  Clearly the optimization problem
\eqref{opt-min-geo-p<0} is well defined.     Repeating the arguments
in the case $p>0$, one can find $A_0^\bullet \in \mathcal{K}(C,
\omega)$ which is also $C$-full, such that $A_0\in \mathcal{Q}$
solves the optimization problem \eqref{opt-min-geo-p<0}.
Consequently, \eqref{Lag-meth-1} holds for any continuous function
$g: \omega\rightarrow \R$ and the desired formula
\eqref{non-norm-p-min} follows.
  \end{proof}

   \bt \label{solution-lp-minkowski-1-norm}
Let $\omega$  be a compact set of $\Omega_C$. Suppose that $\mu$ is
a nonzero finite Borel measure on $\Omega_C$ whose support is
concentrated on $\omega$.  If  $p\in \R$ and $p\neq 0, n$, then
there exists a $C$-full set $A^\bullet$ ($A=C\setminus A^\bullet$)
such that   \begin{equation}\label{sol-uni-p} \mu=\overline{S}_{n-1,
p} (A,
\cdot)=\overline{h}(A,\cdot)^{1-p}\overline{S}_{n-1}(A,\cdot).
\end{equation}
Moreover, if $0<p<1$, the solution to the $L_p$ Minkowski problem is unique.
\et
 \begin{proof}
By Theorem \ref{solution-lp-minkowski-1}, there exists $A_0$ such
that $A_0^{\bullet}=C\setminus A_0\in \mathcal{K}(C, \omega)$ is a
$C$-full set and \eqref{non-norm-p-min} holds, namely $\mu =c \cdot
  \overline{S}_{n-1, p}(A_0, \cdot).$
The easily checked fact that $  \overline{S}_{n-1, p}(A_0, \cdot)$ has homogeneity of degree $n-p$ implies
that $A=c^{\frac{1}{n-p}} A_0$ satisfies \eqref{sol-uni-p}, as desired. The uniqueness of the solutions  to the $L_p$ Minkowski problem for
$0<p<1$ follows immediately from Theorem \ref{14}.
\end{proof}

\section{The  log-Brunn-Minkowski and log-Minkowski inequalities} \label{section:p=0}

In this section, the log-co-sum of two $C$-coconvex sets will be
introduced and related properties will be provided. In particular,
we prove that the log-co-sum of $C$-coconvex sets is still a
$C$-coconvex set. The  log-Brunn-Minkowski and log-Minkowski
inequalities will be established. The log-Minkowski
inequality is applied to confirm that
the solutions to the log-Minkowski problem (i.e., Problem
\ref{lp-min-problem} for $p=0$), which aims to characterize the
cone-volume measures of $C$-coconvex sets, are unique. Hence an open problem raised by Schneider in \cite[p.
203]{Schneider2018} is solved.

Let us begin with the limit of (\ref{definition-p-sum}) as $p\rightarrow 0^{+}$. Let $A_{1}$ and $A_{2}$ be two nonempty $C$-coconvex sets. For $\tau \in(0, 1)$ and $0<p<1$, then for all $x\in
\mathrm{int}C^\circ$,
\begin{equation}\label{1}
\lim_{p\rightarrow 0^{+}}\left[(1-\tau)\overline{h}\left(A_{1}, x\right)^{p}+\tau\overline{h}\left(A_{2}, x\right)^{p}\right]^{\frac{1}{p}}=\overline{h}\left(A_{1}, x\right)^{1-\tau}  \overline{h}\left(A_{2},
x\right)^{\tau}.
 \end{equation}
For simplification, we write
\begin{equation}\label{definition-p-sum-6}
\overline{h}_\tau(x)=
\overline{h}\left(A_{1}, x\right)^{1-\tau}  \overline{h}\left(A_{2},
x\right)^{\tau}.
 \end{equation}

\begin{lemma}\label{concavity-h-tau-11}
Let $\tau\in(0, 1)$, and $A_{1}, A_{2}$ be nonempty
$C$-coconvex sets. Then the function $\overline{h}_\tau: \mathrm{int}C^\circ\rightarrow (0, \infty)$ is concave and has positive homogeneity of degree $1$ in $\text{int}C^\circ$.
\end{lemma}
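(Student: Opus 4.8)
The plan is to verify the three claimed properties of $\overline{h}_\tau$ in turn, namely positivity, positive homogeneity of degree $1$, and concavity, drawing throughout on the corresponding properties of $\overline{h}(A_1,\cdot)$ and $\overline{h}(A_2,\cdot)$ recorded after \eqref{definition-suppport}. Positivity is immediate: since $A_1$ and $A_2$ are nonempty $C$-coconvex sets, both $\overline{h}(A_1,x)$ and $\overline{h}(A_2,x)$ are strictly positive for every $x\in\mathrm{int}\,C^\circ$, hence so is their weighted geometric mean $\overline{h}_\tau(x)=\overline{h}(A_1,x)^{1-\tau}\overline{h}(A_2,x)^{\tau}$. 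For the positive homogeneity I would simply compute, for $t>0$ and $x\in\mathrm{int}\,C^\circ$, using $\overline{h}(A_i,tx)=t\,\overline{h}(A_i,x)$,
\[
\overline{h}_\tau(tx)=\big(t\,\overline{h}(A_1,x)\big)^{1-\tau}\big(t\,\overline{h}(A_2,x)\big)^{\tau}=t^{1-\tau}t^{\tau}\,\overline{h}_\tau(x)=t\,\overline{h}_\tau(x).
\]

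The substance of the lemma is concavity. First I would record that each $\overline{h}(A_i,\cdot)$ is itself concave on $\mathrm{int}\,C^\circ$: being superadditive and positively homogeneous of degree $1$, for $\lambda\in[0,1]$ and $x,y\in\mathrm{int}\,C^\circ$ it satisfies
\[
\overline{h}(A_i,\lambda x+(1-\lambda)y)\ge \overline{h}(A_i,\lambda x)+\overline{h}(A_i,(1-\lambda)y)=\lambda\,\overline{h}(A_i,x)+(1-\lambda)\,\overline{h}(A_i,y).
\]
Abbreviating $a_1=\overline{h}(A_1,x)$, $a_2=\overline{h}(A_1,y)$, $b_1=\overline{h}(A_2,x)$, $b_2=\overline{h}(A_2,y)$, and using that $s\mapsto s^{1-\tau}$ and $s\mapsto s^{\tau}$ are increasing on $(0,\infty)$, the concavity just displayed yields
\[
\overline{h}_\tau(\lambda x+(1-\lambda)y)\ge \big(\lambda a_1+(1-\lambda)a_2\big)^{1-\tau}\big(\lambda b_1+(1-\lambda)b_2\big)^{\tau}.
\]

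It then remains to prove the two-variable concavity of the weighted geometric mean, that is,
\[
\big(\lambda a_1+(1-\lambda)a_2\big)^{1-\tau}\big(\lambda b_1+(1-\lambda)b_2\big)^{\tau}\ge \lambda\,a_1^{1-\tau}b_1^{\tau}+(1-\lambda)\,a_2^{1-\tau}b_2^{\tau}.
\]
This I would obtain from H\"older's inequality with the conjugate exponents $1/(1-\tau)$ and $1/\tau$: writing $\lambda_1=\lambda$, $\lambda_2=1-\lambda$ and noting $\lambda_i\,a_i^{1-\tau}b_i^{\tau}=(\lambda_i a_i)^{1-\tau}(\lambda_i b_i)^{\tau}$, H\"older gives
\[
\sum_{i=1}^{2}(\lambda_i a_i)^{1-\tau}(\lambda_i b_i)^{\tau}\le\Big(\sum_{i=1}^{2}\lambda_i a_i\Big)^{1-\tau}\Big(\sum_{i=1}^{2}\lambda_i b_i\Big)^{\tau},
\]
which is precisely the required inequality. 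Combining the last three displays yields the concavity of $\overline{h}_\tau$. The only mildly delicate point is the opening step of the concavity argument, where one must invoke the monotonicity of the one-dimensional power maps in order to push the concavity of each $\overline{h}(A_i,\cdot)$ through the geometric mean before H\"older can be applied; everything else is routine.
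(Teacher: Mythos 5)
Your proof is correct, but it reaches the concavity by a different mechanism than the paper. The paper deduces concavity of $\overline{h}_\tau$ by writing it as the pointwise limit, as $p\rightarrow 0^{+}$, of the $p$-means $[(1-\tau)\overline{h}(A_1,\cdot)^p+\tau\overline{h}(A_2,\cdot)^p]^{1/p}$ and passing the concavity already established in Lemma \ref{minus is convex} (via the inverse Minkowski inequality \eqref{triangle}) through that limit; in other words, it recycles the $0<p<1$ machinery. You instead work directly at $p=0$: you first push the concavity of each $\overline{h}(A_i,\cdot)$ (superadditivity plus homogeneity, as recorded after \eqref{definition-suppport}) through the monotone power maps, and then invoke the two-variable concavity of the weighted geometric mean, which you correctly derive from H\"older's inequality with exponents $1/(1-\tau)$ and $1/\tau$ after absorbing the weights as $\lambda_i a_i^{1-\tau}b_i^{\tau}=(\lambda_i a_i)^{1-\tau}(\lambda_i b_i)^{\tau}$. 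Every step checks out. What your route buys is self-containedness and the avoidance of any limit interchange; what the paper's route buys is economy, since the hard work was already done in Lemma \ref{minus is convex} and the $p\rightarrow 0^+$ limit is taken pointwise through a finite chain of inequalities, so nothing delicate is hidden there either. The two key inequalities are of course close relatives: your H\"older inequality for the geometric mean is exactly the $p=0$ endpoint of the reverse Minkowski inequality family the paper limits along.
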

\begin{proof} As $A_1$ and $A_2$ are nonempty $C$-coconvex sets,
one sees that both $\overline{h}\left(A_{1}, x\right)$ and
$\overline{h}\left(A_{2}, x\right)$ are positive for all $x\in
\mathrm{int}C^\circ$. Thus, $\overline{h}_\tau(x)>0$ for all $x\in
\mathrm{int}C^\circ$. It is easy to verify that $
\overline{h}_\tau(sx) =s\cdot
\overline{h}_\tau(x)$ for any
$x\in \text{int}C^\circ$ and $s>0$. This shows that $\overline{h}_\tau(\cdot)$ has positive homogeneity of degree $1$ in $\text{int}C^\circ$. Let $\lambda\in (0, 1)$ and $x,y\in \text{int}C^\circ$. It follows from \eqref{1}, \eqref{definition-p-sum-6}, and  (the proof of) Lemma \ref{minus is convex} (in particular, (\ref{triangle})) that \begin{align*}
\overline{h}_\tau(\lambda
x+(1-\lambda)y)
&= \lim_{p\rightarrow 0^{+}}\left[(1-\tau) \overline{h}\left(A_{1},
\lambda x+(1-\lambda)y\right)^{p} + \tau \overline{h}\left(A_{2},
\lambda x+(1-\lambda)y\right)^{p}\right]^{\frac{1}{p}}  \\
&\geq \lim_{p\rightarrow 0^{+}}\lambda\left[(1-\tau)\overline{h}(A_{1}, x)^{p}+\tau\overline{h}(A_{2}, x)^{p}\right]^\frac{1}{p}\\
&\quad\ +\lim_{p\rightarrow 0^{+}}(1-\lambda)\left[(1-\tau)\overline{h}(A_{1}, y)^{p}+
\tau\overline{h}(A_{2}, y)^{p}\right]^\frac{1}{p}\\
&=\lambda \overline{h}_\tau(x)+(1-\lambda)
\overline{h}_\tau(y).
\end{align*} This shows that $\overline{h}_\tau$ is concave in $\text{int}C^\circ$ as desired. \end{proof}

By Lemma \ref{concavity-h-tau-11},   $-
\overline{h}_\tau$  is convex, strictly negative, and has positive homogeneity of degree $1$ in $\text{int}C^\circ$.   Therefore, $-
\overline{h}_\tau$ uniquely determines a closed convex set contained in $C$. Indeed, one can restrict $-
\overline{h}_\tau$ to  $\Omega_C=S^{n-1}\cap \mathrm{int}C^{\circ}$  and let
\begin{align}\label{relation-supp-close-6}
((1-\tau)\diamond A_1\oplus_0\tau \diamond  A_2)^{\bullet}&=C\cap \bigcap_{u\in
\Omega_{C}}H^{-}(u,-\overline{h}_\tau(u)).
\end{align} Consequently, for all $u\in \Omega_C$
\begin{equation*}
h(((1-\tau)\diamond A_1\oplus_0\tau \diamond A_2)^{\bullet}, u)=-\overline{h}_\tau (u).
 \end{equation*} Moreover, $o\notin  \big((1-\tau)\diamond A_1\oplus_0\tau \diamond A_2\big)^{\bullet} $
as $\overline{h}_\tau>0$ on $\Omega_C$.
 Based on \eqref{relation-supp-close-6}, let
\begin{equation}\label{def-concovex-log}
(1-\tau)\diamond A_1\oplus_0\tau\diamond A_2=C\setminus
\big((1-\tau)\diamond A_1\oplus_0\tau\diamond  A_2\big)^{\bullet}
\end{equation} and this set will be called the log-co-sum of $C$-coconvex sets $A_1$ and $A_2$ with respect to $\tau$.  Note that,  for any  $u\in\Omega_C$,  \begin{equation}\label{support-def-log}
\overline{h}((1-\tau)\diamond A_1\oplus_0\tau\diamond A_2,u)=\overline{h}\left(A_{1}, u\right)^{1-\tau}  \overline{h}\left(A_{2},
u\right)^{\tau}.
\end{equation}

The following theorem states the log-Brunn-Minkowski inequality for two $C$-coconvex sets, and hence shows that the set $(1-\tau)\diamond A_1\oplus_0\tau \diamond   A_2$ is a nonempty $C$-coconvex set.

 \bt \label{minus is convex-6}
Let $\tau\in(0, 1)$. If $A_{1}$ and $A_{2}$ are nonempty
$C$-coconvex sets, then
\begin{equation}\label{1.5-6}
V_{n}((1-\tau)\diamond  A_{1}\oplus_{0}\tau \diamond  A_{2}) \leq
 V_n(A_{1})^{1-\tau} V_n(A_{2})^{\tau}.
\end{equation}
 \et
\begin{proof}
By the arithmetic-geometric inequality and \eqref{definition-p-sum-6}, it follows that for all $p\in (0, 1)$ and $u\in \Omega_C$,
\begin{align*}
\overline{h}((1-\tau)\circ A_{1}\oplus_{p}\tau\circ A_{2},
u)&=\big[(1-\tau)\overline{h}\left(A_{1},u\right)^{p}+\tau
\overline{h}\left(A_{2},u\right)^{p}\big]^{\frac{1}{p}}\\
&\geq \overline{h}\left(A_{1}, u\right)^{1-\tau}  \overline{h}\left(A_{2},
u\right)^{\tau}=\overline{h}_\tau(u).
\end{align*}
 Thus, by \eqref{def-co-p-sum-2-2-2}, \eqref{relation-supp-close-6} and Theorem \ref{co-sum-set-1-def-theo}, one gets
\begin{eqnarray*}
((1-\tau)\diamond  A_1\oplus_0\tau\diamond  A_2)^\bullet & =& C\cap \bigcap_{u\in
\Omega_{C}}H^{-}\Big(u, -\overline{h}_\tau(u)\Big)\\  &\supseteq & C\cap \bigcap_{u\in \Omega_{C}}H^{-}\Big(u,
-\overline{h}\left((1-\tau)\circ  A_{1}\oplus_p \tau\circ
A_{2},u\right)\Big)\\ &=& \big((1-\tau) \circ  A_{1}\oplus_p \tau
\circ A_{2}\big)^{\bullet}.
\end{eqnarray*}
Taking the complement with respect to $C$,  one gets
  \begin{eqnarray*}
  (1-\tau)\diamond  A_1\oplus_0\tau\diamond  A_2\subseteq   (1-\tau) \circ A_{1}\oplus_p\tau\circ  A_{2}.\label{necce-1}
  \end{eqnarray*}
This further yields, by the  $L_p$ Brunn-Minkowski inequality
(\ref{1.5}), that
\begin{eqnarray}V_n((1-\tau)\diamond  A_1\oplus_0\tau\diamond  A_2)&\leq& V_n((1-\tau)
\circ A_{1}\oplus_p\tau \circ A_{2}) \nonumber\\&\leq& \big((1-\tau)
V_n(A_{1})^{\frac{p}{n}}+\tau
V_n(A_{2})^{\frac{p}{n}}\big)^{\frac{n}{p}}<\infty. \label{comp-o-p}
\end{eqnarray}
Therefore, $(1-\tau)\diamond A_1\oplus_0\tau \diamond A_2$ has finite volume and hence is a
$C$-coconvex set. To obtain inequality
\eqref{1.5-6}, one lets $p\rightarrow 0^+$ in
\eqref{comp-o-p}, and then
\begin{eqnarray*}
V_n((1-\tau)\diamond  A_1\oplus_0\tau\diamond  A_2)\leq \lim_{p\rightarrow 0^+}
\big((1-\tau) V_n(A_{1})^{\frac{p}{n}}+\tau
V_n(A_{2})^{\frac{p}{n}}\big)^{\frac{n}{p}}= V_n(A_{1})^{1-\tau}
V_n(A_{2})^{\tau}.
\end{eqnarray*}
This completes the proof of the theorem.   \end{proof}

Unfortunately, the proof in Theorem \ref{minus is convex-6} does not
give the characterization of equality for  \eqref{1.5-6}. However, by the fact that the set $(1-\tau)\diamond A_1\oplus_0\tau \diamond   A_2$ is a nonempty $C$-coconvex set proved in Theorem \ref{minus is convex-6}, the
following theorem establishes the  log-Minkowski inequality and the
log-Brunn-Minkowski inequality \eqref{1.5-6} with characterization
of equalities. Define the $L_0$ (or log) mixed volume of two
nonempty $C$-coconvex sets $A_1$ and $A_2$ by
\begin{eqnarray}\label{def-log-mix} \overline{V}_0(A_1,
A_2)=\dfrac{1}{n}\int_{\Omega_C} \log\bigg(\frac{\overline{h}(A_2,
u)}{\overline{h}\left(A_1, u\right)}\bigg) \overline{h}\left(A_1,
u\right)\,d\overline{S}_{n-1}\left(A_1, u\right),\end{eqnarray} provided the integral exists and is finite.

 \bt \label{log is convex-6-1}
Let $A_{1}$ and $A_{2}$ be two nonempty $C$-coconvex sets. The
following  log-Minkowski inequality holds with equality if and only
if $A_1=\alpha A_2$ for some $\alpha>0$:
\begin{equation} \label{log-ine-1-1-6}
\frac{ \overline{V}_0(A_1, A_2)}{V_n(A_1)}\leq \frac{1}{n}
\cdot \log\bigg(\dfrac{V_{n}(A_{2})}{V_{n}(A_{1})}\bigg).
 \end{equation}
Moreover, the  log-Brunn-Minkowski inequality \eqref{1.5-6} holds
with equality if and only if  $A_1=\alpha A_2$ for some $\alpha>0$.
 \et

\begin{proof}
The proof follows along the lines for those in the proofs of Theorems
\ref{9} and \ref {th5-2-6}.  Indeed,
 a probability measure on $\Omega_C$ can be defined as follows:
$$
\,d\nu=\dfrac{\overline{h}(A_1,\cdot)}{nV_{n}(A_1)}\,d\overline{S}_{n-1}(A_1,\cdot).
$$
It follows from the Minkowski inequality \eqref{Minkowski ineq}  and Jensen's
inequality (as the logarithmic function is strictly concave) that
\begin{eqnarray}
\frac{\overline{V}_0(A_1, A_2)}{V_n(A_1)}
&=&\int_{\Omega_C}\log\bigg(\frac{\overline{h}(A_2,u)}{\overline{h}(A_1,u)}\bigg)
\,d\nu(u)\nonumber \\
&\leq & \log \bigg(\int_{\Omega_C}\frac{\overline{h}(A_2,u)}{\overline{h}(A_1,u)}\,d\nu(u)\bigg) \nonumber \\
&=&\log  \bigg(\int_{\Omega_C}\frac{\overline{h}(A_2,u)}{nV_{n}(A_1)}\,d\overline{S}_{n-1}(A_1, u)\bigg)
\nonumber \\
&=&\log \bigg(\dfrac{\overline{V}_{1}(A_1,A_{2})}{V_{n}(A_1)}\bigg)\nonumber \\&\leq&\frac{1}{n}\cdot
\log\bigg(\dfrac{V_{n}(A_{2})}{V_{n}(A_{1})}\bigg).\label{2.1-log}
\end{eqnarray}
This is the desired inequality \eqref{log-ine-1-1-6}. The
characterization of equality for inequality \eqref{log-ine-1-1-6} is
an easy consequence of the characterization of equality for
inequality \eqref{Minkowski ineq} and for Jensen's inequality
(applied in \eqref{2.1-log}).

Although the log-Brunn-Minkowski inequality \eqref{1.5-6} has
already been proved in Theorem \ref{minus is convex-6}, it can also
be obtained by taking use of  inequality \eqref{log-ine-1-1-6} as
follows: as $(1-\tau)\diamond  A_{1}\oplus_{0}\tau \diamond  A_{2}$ is $C$-coconvex, one
has
\begin{eqnarray}
\frac{ \overline{V}_0((1-\tau)\diamond  A_{1}\oplus_{0}\tau\diamond  A_{2}, A_1)}{V_n((1-\tau)\diamond  A_{1}\oplus_{0}\tau\diamond   A_{2})}
&\leq& \frac{1}{n}\cdot \log\bigg(\dfrac{V_{n}(A_{1})}{V_{n}((1-\tau)\diamond  A_{1}\oplus_{0}\tau\diamond   A_{2})}\bigg),
\label{two-inq0log0} \\
\frac{\overline{V}_0((1-\tau)\diamond  A_{1}\oplus_{0}\tau\diamond  A_{2}, A_2)}{V_n((1-\tau)\diamond  A_{1}\oplus_{0}\tau\diamond  A_{2})}
&\leq& \frac{1}{n}\cdot \log\bigg(\dfrac{V_{n}(A_{2})}{V_{n}((1-\tau)\diamond  A_{1}\oplus_{0}\tau\diamond  A_{2})}\bigg).
\label{two-inq-log}
 \end{eqnarray}
 By \eqref{support-def-log} and \eqref{def-log-mix}, one can get
$$0= (1-\tau) \overline{V}_0((1-\tau)\diamond  A_{1}\oplus_{0}\tau\diamond  A_{2},
A_1)+\tau \overline{V}_0((1-\tau)\diamond  A_{1}\oplus_{0}\tau \diamond  A_{2}, A_2).$$
Together with \eqref{two-inq0log0} and \eqref{two-inq-log}, one gets
\begin{eqnarray} 0\leq (1-\tau)
\log\Big(\dfrac{V_{n}(A_{1})}{V_{n}((1-\tau)\diamond  A_{1}\oplus_{0}\tau\diamond
A_{2})}\Big) +\tau
\log\Big(\dfrac{V_{n}(A_{2})}{V_{n}((1-\tau)\diamond  A_{1}\oplus_{0}\tau\diamond
A_{2})}\Big),\label{equ-cha-1-1}  \end{eqnarray} which is exactly
the desired inequality  \eqref{1.5-6} after simplification. To
characterize the equality for \eqref{1.5-6}, without loss of
generality, we can assume that $V_n(A_1)=V_n(A_2)=1$ (due to the
homogeneity of  \eqref{1.5-6} for $A_1$ and $A_2$). In this case, if
equality holds in inequality \eqref{1.5-6}, one must have equality
in \eqref{equ-cha-1-1} and hence in  \eqref{two-inq0log0} and
\eqref{two-inq-log}. It follows from the characterization for
equality in \eqref{log-ine-1-1-6} that both $A_1, A_2$ are dilations
of $(1-\tau)\diamond  A_{1}\oplus_{0}\tau\diamond   A_{2}$. This further implies that
equality holds in \eqref{1.5-6} only if  $A_1=\alpha A_2$ for some
$\alpha>0$.  Conversely,  it is trivial to check that the equality
holds in \eqref{1.5-6} if $A_1=\alpha A_2$ for some $\alpha>0$ and
this completes the characterization of the equality of the
log-Brunn-Minkowski inequality \eqref{1.5-6}.
\end{proof}

In \cite[Lemma 9]{Schneider2018}, the cone-volume measure of a
$C$-close set $A^{\bullet}$, denoted by $V_{A^{\bullet}}$, on $\Omega_C$ has the following
representation: for any Borel sets $\eta\subset\Omega_C$,
\begin{equation}\label{def-volume-measure}
V_{A^{\bullet}}(\eta)=\frac{1}{n}\int_{\eta} -h(A^{\bullet}, u)\,d S_{n-1}(A^{\bullet}, u).\end{equation}
Again let $ \overline{V}_{C\setminus A^{\bullet}} =V_{A^{\bullet}}$ be the cone-volume
measure of $C\setminus A^{\bullet}$.  Clearly, $V_{A^{\bullet}}(\Omega_C)=V_n(C\setminus
A^{\bullet})<\infty$ as $C\setminus A^{\bullet}$ is a $C$-coconvex set. Hence the
cone-volume measure $V_{A^{\bullet}}$ is finite on $\Omega_C$.   The problem to
characterize the cone-volume measure of $C$-close sets has been
proposed by Schneider in \cite{Schneider2018}. This problem may be
called the $L_0$ (or log) Minkowski problem.
\begin{problem}[Log-Minkowski problem]\label{log-mink-1} Under what
necessary and/or  sufficient conditions on a nonzero finite Borel
measure $\mu$ on $\Omega_C$ does there exist a $C$-close set
$A^\bullet$ with $A=C\setminus A^\bullet$ such that $\mu= \overline
{V}_A$?
\end{problem}
The existence of solutions to the log-Minkowski problem has been
provided in \cite[Theorems 4 and 5]{Schneider2018}. The following
result confirms that the solutions to the log-Minkowski problem are
indeed unique and hence solves the open problem raised by  Schneider
in \cite[p. 203]{Schneider2018}.

\bt \label{14-log} Let $A_1^{\bullet}, A_2^{\bullet}$ be two
$C$-close sets such that $A_1=C\setminus A_1^{\bullet}$ and
$A_2=C\setminus A_2^{\bullet}$ are two $C$-coconvex sets.  Then
$A_{1}=A_{2}$, if the following identity holds on $\Omega_C$:
\begin{equation}\label{assumption-1-log} \overline{V}_{A_{1}} =
\overline{V}_{A_{2}}. \end{equation} In particular, the solutions to
the log-Minkowski problem (i.e., Problem \ref{log-mink-1}) are
unique.
 \et
\begin{proof}
It follows from \eqref{def-log-mix}, \eqref{def-volume-measure} and
\eqref{assumption-1-log} that  $V_n(A_1)=V_n(A_2)$ and
\begin{eqnarray}  \overline{V}_0(A_1,
A_2)&=&\dfrac{1}{n}\int_{\Omega_C} \log\bigg(\frac{\overline{h}(A_2,
u)}{\overline{h}\left(A_1, u\right)}\bigg) \overline{h}\left(A_1,
u\right)\,d\overline{S}_{n-1}\left(A_1, u\right) \nonumber \\
&=&\int_{\Omega_C} \log\bigg(\frac{\overline{h}(A_2,
u)}{\overline{h}\left(A_1, u\right)}\bigg) \,d
\overline{V}_{A_1}\left(u\right)\nonumber \\
&=&\int_{\Omega_C} \log\bigg(\frac{\overline{h}(A_2,
u)}{\overline{h}\left(A_1, u\right)}\bigg) \,d
\overline{V}_{A_2}\left(u\right) \nonumber \\
&=&-\dfrac{1}{n}\int_{\Omega_C} \log\bigg(\frac{\overline{h}(A_1,
u)}{\overline{h}\left(A_2, u\right)}\bigg)  \overline{h}\left(A_2,
u\right)\,d\overline{S}_{n-1}\left(A_2, u\right)\nonumber \\
&=&-\overline{V}_0(A_2, A_1). \label{two-neg-log}\end{eqnarray} An
application of the  log-Minkowski inequality \eqref{log-ine-1-1-6},
together with the fact that $V_n(A_1)=V_n(A_2)$, yields
\begin{eqnarray}
\frac{ \overline{V}_0(A_1, A_2)}{V_n(A_1)}\leq \frac{1}{n}\cdot
\log\bigg(\dfrac{V_{n}(A_{2})}{V_{n}(A_{1})}\bigg)=0
\ \ \mathrm{and} \ \ \frac{ \overline{V}_0(A_2, A_1)}{V_n(A_2)}
\leq \frac{1}{n}\cdot \log\bigg(\dfrac{V_{n}(A_{1})}{V_{n}(A_{2})}\bigg)=0.\label{two-zeros-log}
 \end{eqnarray}
It can be easily checked by \eqref{two-neg-log} and
\eqref{two-zeros-log}  that
\begin{eqnarray*}
\frac{ \overline{V}_0(A_1, A_2)}{V_n(A_1)}
= \frac{1}{n}\cdot \log\bigg(\dfrac{V_{n}(A_{2})}{V_{n}(A_{1})}\bigg)=0.
 \end{eqnarray*}
By the characterization of equality for  the  log-Minkowski
inequality \eqref{log-ine-1-1-6}, one gets $A_1=\alpha A_2$ for some
$\alpha>0$. Moreover,  $V_n(A_2)=V_n(A_1)=V_n(\alpha A_2)=\alpha^n
V_n(A_2),$ which implies $\alpha=1$. Hence $A_1=A_2$ as desired.
\end{proof}

\vskip 2mm \noindent  {\bf Acknowledgement.} The research of JY is supported by NSFC (No.\ 11971005).  The research of DY is
supported by an NSERC grant, Canada. The research of BZ is supported by NSFC (No.\ 11971005) and the Fundamental Research Funds for the Central Universities (No. GK202102012). The authors are greatly indebted to the reviewers for many valuable comments which greatly improves the quality of the paper.

\vskip 2mm \noindent Jin Yang, \ \ \ {\small \tt yangjin95@126.com}\\
{ \em School of Mathematics and Statistics, Hubei Minzu University, Enshi, 445000, China}\\
{ \em School of Mathematics, Sichuan University, Chengdu, Sichuan, 610000, China }

\vskip 2mm \noindent Deping Ye, \ \ \ {\small \tt deping.ye@mun.ca}\\
{\em Department of Mathematics and Statistics, Memorial
University of Newfoundland, St. John's, Newfoundland A1C 5S7,
Canada
}

\vskip 2mm \noindent Baocheng Zhu, \ \ \ {\small \tt zhubaocheng814@163.com}\\
{ \em School of Mathematics and Statistics, Shaanxi Normal
University, Xi'an, 710062, China}


\begin{thebibliography}{99}
\addtolength{\itemsep}{-0.5ex}

\bibitem{Aleks1938} A.D. Aleksandrov, \emph {On the theory of mixed volume.
III. Extension of two theorems of Minkowski on convex polyhedra to
arbitrary convex bodies}, Mat. Sb., Russian,  3 (1938): 27-46.  An English translation is available in
A.D. Aleksandrov, \emph{Selected Works Part I: Selected Scientific Papers.} Edited by Y.G. Reshetnyak and S.S. Kutateladze, Gordon and Breach Publishers, Amsterdam 1996.


\bibitem{BHZ}
K.J. B\"{o}r\"{o}czky, P. Heged\H{u}s and G. Zhu, \emph {On the
discrete logarithmic Minkowski problem,} Int. Math. Res. Not. IMRN, 2016
(2016): 1807-1838.

\bibitem{BH2016} K.J. B\"{o}r\"{o}czky and M. Henk, \emph{Cone-volume measure of general
centered convex bodies}, Adv. Math., 286 (2016): 703-721.


\bibitem{BK-2020}
K.J. B\"{o}r\"{o}czky and P. Kalantzopoulos, \emph{Log-Brunn-Minkowski inequality under symmetry},
{\tt arXiv:2002.12239}.

\bibitem{BLYZ2012} K.J. B\"{o}r\"{o}czky, E. Lutwak, D. Yang and G. Zhang,
\emph{The log-Brunn-Minkowski inequality,} Adv. Math.,  231 (2012):
1974-1997.



\bibitem{BLYZ2013} K.J. B\"{o}r\"{o}czky, E. Lutwak, D. Yang and G. Zhang,
\emph {The logarithmic Minkowski problem}, J. Amer. Math. Soc., 26 (2013): 831-852.




\bibitem{CG2002a} S. Campi and P. Gronchi, \emph{The $L_p$-Busemann-Petty centroid inequality},
Adv. Math., 167 (2002): 128-141.

\bibitem{CHLZ-2019}
S. Chen, Y. Huang, Q. Li and J. Liu, \emph{The $L_p$-Brunn-Minkowski
inequality for $p<1$,} Adv. Math.,  368 (2020): 107166.


\bibitem{CLZ2019}S. Chen, Q. Li and G. Zhu,
\emph{The logarithmics Minkowski problem for non-symmetric measures,}
Trans. Amer. Math. Soc., 371 (2019): 2623-2641.


\bibitem{Chen2006}W. Chen, \emph{$L_p$ Minkowski problem with not
necessarily positive data}, Adv. Math., 201
(2006): 77-89.


\bibitem{CW-Unbounded} K. Chou and X. Wang, \emph{Minkowski problems for complete noncompact convex hypersurfaces,}  Topol. Methods Nonlinear Anal., 6 (1995): 151-162.


\bibitem{CW2006} K. Chou and X. Wang, \emph{The $L_p$ Minkowski problem and the
Minkowski problem in centroaffine
geometry}, Adv. Math., 205 (2006): 33-83.



\bibitem{CLYZ2009} A. Cianchi, E. Lutwak, D. Yang and G. Zhang, \emph{Affine Moser-Trudinger
and Morrey-Sobolev inequalities}, Calc. Var. Partial Differential Equations, 36 (2009):
419-436.

\bibitem{FJ1938}W. Fenchel and B. Jessen, \emph{Mengenfunktionen und konvexe K\"{o}rper},
Danske Vid. Selskab. Mat.-fys. Medd., 16 (1938): 1-31.



\bibitem{Haberl2008} C. Haberl, \emph{$L_p$ intersection bodies}, Adv. Math., 217 (2008): 2599-2624.

\bibitem{HS2009a} C. Haberl and F.E. Schuster, \emph{General $L_p$ affine isoperimetric inequalities},
J.
Differential Geom., 83 (2009): 1-26.

\bibitem{HS2009b} C. Haberl and F.E. Schuster, \emph{Asymmetric affine $L_p$ Sobolev inequalities},
J. Funct. Anal., 257 (2009): 641-658.

\bibitem{HL2014} M. Henk and E. Linke, \emph{Cone-volume measures of polytopes},
Adv. Math., 253 (2014): 50-62.

\bibitem{HLX2015} Y. Huang, J. Liu and L. Xu, \emph {On the uniqueness of $L_p$
Minkowski problems: The constant $p$-curvature case in
$\mathbb{R}^3$}, Adv. Math., 281 (2015): 906-927.

\bibitem{HLYZ2005} D. Hug, E. Lutwak, D. Yang and G. Zhang, \emph {On the $L_p$-Minkowski
problem for polytopes}, Discrete Comput. Geom., 33 (2005): 699-715.


\bibitem{JLW2015} H. Jian, J. Lu and X. Wang, \emph{
Nonuniqueness of solutions to the $L_p$ Minkowski problem}, Adv.
Math., 281 (2015):  845-856.

\bibitem{JLZ2016} H. Jian, J. Lu and G. Zhu, \emph{
Mirror symmetric solutions to the centro-affine Minkowski problem},
Calc. Var. Partial Differential Equations, 55 (2016), Art. 41, 22 pp.


\bibitem{Kh2014} A. Khovanski\u{\i} and V. Timorin, \emph {On the theory of
               coconvex bodies}, Discrete Comput. Geom., 52 (2014): 806-823.

\bibitem{K-M-2020}
A.V. Kolesnikov and E. Milman, \emph{Local $L_p$-Brunn-Minkowski inequalities for $p<1$},
Mem. Amer. Math. Soc. (2020), in press. {\tt arXiv:1711.01089}.


\bibitem{LW2013} J. Lu and X. Wang, \emph { Rotationally symmetric solutions to the
$L_p$ Minkowski problem}, J. Differential Equations, 254 (2013):
983-1005.


  \bibitem{Lut1993} E. Lutwak, \emph {The Brunn-Minkowski-Firey theory I: Mixed volume and the Minkowski
               problem}, J. Differential Geom., 38 (1993): 131-150.

\bibitem {Lu96} E. Lutwak, {\em The Brunn-Minkowski-Firey theory II: Affine and geominimal surface areas},
Adv. Math., 118 (1996): 244-294.



\bibitem{LYZ2000} E. Lutwak, D. Yang and G. Zhang,
\emph{$L_p$ affine isoperimetric inequalities}, J. Differential
Geom., 56 (2000): 111-132.


\bibitem{LYZ2002} E. Lutwak, D. Yang and G. Zhang, \emph{Sharp
affine $L_p$ Sobolev inequalities}, J. Differential Geom., 62
(2002): 17-38.
\bibitem{LYZ2004} E. Lutwak, D. Yang and G. Zhang, \emph {On the $L_p$-Minkowski problem},
Trans. Amer. Math. Soc., 356 (2004): 4359-4370.

\bibitem{LZ1997} E. Lutwak and G. Zhang, \emph{Blaschke-Santal\'{o} inequalities}, J.
Differential Geom., 47 (1997): 1-16.

\bibitem{ML-2015}
L. Ma, \emph{ A new proof of the Log-Brunn-Minkowski inequality,}
Geom. Dedicata, 177 (2015): 75-82.



\bibitem{MW2000} M. Meyer and E. Werner, \emph{On the $p$-affine surface area}, Adv.
Math., 152 (2000): 288-313.

\bibitem{MR2014} E. Milman and L. Rotem, \emph{Complemented Brunn-Minkowski
inequalities and isoperimetry for homogeneous and non-homogeneous
measures}, Adv. Math., 262 (2014): 867-908. Corrigendum: Adv. Math.,
307 (2017): 1378-1379.


\bibitem{min1897} H. Minkowski, \emph{Allgemeine Lehrs\"{a}tze \"{u}ber die konvexen Polyeder},
Nach. Ges. Wiss. G\"{o}ttingen, (1897): 198-219.

\bibitem{Minkowski1903} H. Minkowski, \emph{Volumen und Oberfl\"{a}che}, Math. Ann., 57 (1903): 447-495.



\bibitem{PW2012} G. Paouris and E. Werner, \emph{Relative entropy of cone measures and $L_p$
centroid bodies}, Proc. London Math. Soc. (3), 104 (2012): 253-286.

\bibitem{Pogorelov}  A.  Pogorelov, \emph{An analogue of the Minkowski problem for infinite complete convex hypersurfaces},  Dokl. Akad. Nauk, 250 (1980): 553-556.



\bibitem{Saro2015} C. Saroglou,  \emph{Remarks on the conjectured log-Brunn-Minkowski inequality},
Geom. Dedicata, 177 (2015): 353-365.

\bibitem{Schneider2014} R. Schneider, \emph {Convex Bodies: The Brunn-Minkowski Theory},
$2$nd edition, Encyclopedia Math. Appl., Cambridge Univ. Press,
Cambridge, 2014.

\bibitem{Schneider2018} R. Schneider, \emph {A Brunn-Minkowski theory for coconvex sets of finite
               volume}, Adv. Math., 332 (2018): 199-234.

\bibitem{Schneider2021} R. Schneider,  \emph {Minkowski type theorems for convex sets in cones},   Acta Math. Hungar., 164 (2021): 282-295.

\bibitem{SW2004}  C. Sch\"{u}tt and E. Werner, \emph{Surface bodies and $p$-affine surface area},
Adv. Math., 187 (2004): 98-145.


\bibitem {stancu02} A. Stancu, \emph {The discrete planar $L_0$-Minkowski problem}, Adv.
Math., 167 (2002): 160-174.



\bibitem {stancu08} A. Stancu, \emph {The necessary condition for the discrete
$L_0$-Minkowski problem in $\mathbb{R}^2$}, J. Geom., 88 (2008):
162-168.

\bibitem{Stancu16}
A. Stancu, \emph {The logarithmic Minkowski inequality for non-symmetric convex bodies,}
Adv. Appl. Math., 73 (2016): 43-58.



\bibitem{Umanskiy2003}V. Umanskiy, \emph{On solvability of two-dimensional $L_p$-Minkowski problem},
Adv. Math., 180 (2003): 176-186.

\bibitem{Urbas}  J. Urbas, \emph{The equation of prescribed Gauss curvature without boundary conditions}, J. Differential Geom.,
20 (1984): 311-327.

\bibitem{WY2008} E. Werner and D. Ye, \emph {New $L_p$-affine isoperimetric
inequalities,} Adv. Math., 218 (2008): 762-780.

\bibitem{YangZhang-2019}
Y. Yang and D. Zhang, \emph{The log-Brunn-Minkowski inequality in  $
\mathbb{R}^3$,} Proc. Amer. Math. Soc., 147 (2019): 4465-4475.


\bibitem {Ye2015a}D. Ye, \emph {$L_p$ geominimal surface areas and their inequalities,}
Int. Math. Res. Not. IMRN, 2015 (2015): 2465-2498.

\bibitem{YZZ2015}{D. Ye, B. Zhu and J. Zhou}, \emph {The mixed $L_p$ geominimal surface area
for multiple convex bodies,}
Indiana Univ. Math. J., 64 (2015): 1513-1552.


\bibitem{zhang1999}  G. Zhang, \emph{The affine Sobolev inequality}, J. Differential Geom., 53
(1999): 183-202.

\bibitem{Zhao2016} Y. Zhao, \emph{On $L_p$-affine surface area and curvature
measures}, Int. Math. Res. Not. IMRN, 2016 (2016): 1387-1423.

\bibitem{ZZX2015} B. Zhu, J. Zhou and W. Xu,  \emph{$L_p$  mixed geominimal
surface area}, J. Math. Anal. Appl., 422 (2015): 1247-1263.


\bibitem{zhug2014} G. Zhu, {\em The logarithmic Minkowski problem for polytopes},
Adv. Math., 262 (2014): 909-931.

\bibitem{GZhu2015I}G. Zhu, \emph{The $L_p$ Minkowski problem for polytopes for $0 < p< 1$},
J. Funct. Anal., 269 (2015): 1070-1094.

\bibitem{GZhu2015II} G. Zhu, \emph{The centro-affine Minkowski problem for polytopes},
J. Differential Geom., 101
(2015): 159-174.
\bibitem{GZhu2017} G. Zhu, \emph{The $L_p$ Minkowski problem for polytopes for $p < 0$},
Indiana Univ. Math. J., 66 (2017): 1333-1350.


\end{thebibliography}
\end{document}